\numberwithin{equation}{section}
\numberwithin{equation}{section}
\newcommand{\beq}{\begin{equation}}
\newcommand{\eeq}{\end{equation}}
\newcommand{\beqs}{\begin{eqnarray*}}
\newcommand{\eeqs}{\end{eqnarray*}}
\newcommand{\beqn}{\begin{eqnarray}}
\newcommand{\eeqn}{\end{eqnarray}}
\newcommand{\beqa}{\begin{array}}
\newcommand{\eeqa}{\end{array}}
\def\x{{\mathbf{x}}}
\def\n{{\mathbf{n}}}
\newtheorem{prop}{Proposition}[section]
\newtheorem{theo}[prop]{Theorem}
\newtheorem{lem}[prop]{Lemma}
\newtheorem{cor}[prop]{Corollary}
\title{ A local estimate for the mean curvature flow}
\author{Zhen Wang 
}
\begin{document}
\bibliographystyle{plain}


\maketitle

\begin{abstract}
We establish a pointwise estimate of $A$ along the mean curvature flow in terms of the initial geometry and the $|HA|$ bound. As corollaries we obtain the extension theorem of $HA$ and the blowup rate estimate of $HA$.
\end{abstract}

\tableofcontents

\section{Introduction}
Let $\x_0:\Sigma^n\to\mathbb{R}^{n+1}$ be a complete smooth immersed hypersurface without boundary and a family of immersions $\x(x,t):\Sigma^n\times[0,T)\to\mathbb{R}^{n+1}$ be a solution to the equation 
$$\partial_{t}\x=-H\n,\quad \x(0)=\x_0,$$
which is called a mean curvature flow with the maximal time $T\leq\infty$. When $\Sigma$ is compact, $T<\infty$ by the comparison principle for mean curvature flow and $\sup_{\Sigma_t}|A|\to\infty$ as $t\to T$ by Huisken in \cite{[H]}.

Since the finite-time singularity for a compact mean curvature flow is characterized by the blowup of the second fundamental form, it is of great interest to express this criterion in terms of some simpler quantity. Let $n\geq2$. A natural conjecture is the blowup of the mean curvature $H$, which was confirmed in dimension two by Li-Wang \cite{[LW]}. In \cite{[C]} Cooper proved the $HA$ tensor blows up at time $T$. In \cite{[C],[LS],[LS3]}, Cooper and Le-Sesum proved that the mean curvature blows up assuming some blowup rate of the second fundamental form. Some extension results under integral conditions can be seen in Le-Sesum \cite{[LS2]} and Xu-Ye-Zhao \cite{[XYZ]}. 

Similar blowup problems and extension problems have been studied for Ricci flow as well. In \cite{[Ha]} Hamilton proved that the Riemann curvature tensor blows up at the finite singular time. In \cite{[S]} Sesum proved the blowup of the Ricci curvature. In \cite{[W2],[CW],[KMW]}, Wang, Chen-Wang and Kotschwar-Munteanu-Wang arrived at estiamtes on curvature growth in terms of the Ricci curvature. 

The explict local estimate in Kotschwar-Munteanu-Wang \cite{[KMW]} has some precedent on a gradient shrinking soliton in \cite{[MW]}, a bound on $Ric$ implies a polynomial growth bound on $Rm$. The feasibility lies in the observation that the second order derivatives of $Ric$ appear as time-derivative of $Rm$, i.e., 
$$\partial_{t}Rm=c\nabla^2 Ric,$$
which helps to yield a differential inequality on spatial integrations. This equation follows from the fact that $Ric$ describes the metric evolution along a Ricci flow. Now, for a mean curvature flow $HA$ describes the metric evolution and plays the role of $Ric$ by
$$\partial_{t}|A|^2=2(\nabla^{2}H\cdot A+ H|A^3|).$$
In the present paper, we follow the techniques on integration estimates from \cite{[KMW]} and establish the following local $L^\infty$ estimate of $A$ in terms of the initial geometry and the $|HA|$ bound along the flow.

\begin{theo}(Theorem \ref{theo:local est})
    Fix $x_0\in\mathbb{R}^{n+1}$ and $r>0$. Let $\x:\Sigma^n\times[t_0,t_1]\rightarrow \mathbb{R}^{n+1}$ be a complete smooth mean curvature flow satisfying the uniform bound 
    $$\sup_{B(x_0,r)\cap \Sigma_t}|HA|(\cdot,t)\leq K(t),\quad \forall\, t\in[t_0,t_1].$$
    Then for any $q>n+2$ there exist positive constants $C=C(n,r,t_1-t_0,q,K(t_0))$ and $c=c(n,q)$ such that for any $t\in[t_0,t_1]$
    \beqs
    \sup_{B(x_0,r/2)\cap \Sigma_t}|A|
    \leq C\Big(1+\|A\|_{L^{q}(B_{2r})}^{q}\Big)^c\Big(1+\mbox{Vol}_{g(t_0)}(B_{2r})\Big)^c
    \Big(\int_{t_0}^{t}e^{\int_{t_0}^{s}(|K'|/K+cK)}ds\Big)^{c},
    \eeqs
    where $B_{2r}=B(x_0,2r+n^{1/4}\int_{t_0}^{t}\sqrt{K})\cap \Sigma_{t_0}.$
\end{theo}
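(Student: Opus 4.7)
The plan is to adapt the Kotschwar--Munteanu--Wang integration scheme from Ricci flow to mean curvature flow, exploiting the evolution identity
$$\partial_{t}|A|^{2}=2\,\nabla^{2}H\cdot A+2H\,\mathrm{tr}(A^{3}),$$
in which the controlled quantity $H$ appears to second order, so that integration by parts can transfer its derivatives onto a test function. As a preliminary, observe that $|HA|\leq K$ combined with $H^{2}\leq n|A|^{2}$ gives $H^{4}\leq n|HA|^{2}\leq nK^{2}$, hence $|H|\leq n^{1/4}\sqrt{K}$ pointwise. In particular every point initially in $B_{2r}$ remains inside $B(x_{0},2r+n^{1/4}\int_{t_{0}}^{t}\sqrt{K})\cap\Sigma_{t}$ throughout the interval, justifying the use of the initial ball $B_{2r}$ as a reference domain throughout the argument.

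\textbf{Step 1 ($L^{p}$ energy estimate).} Choose a spatial cutoff $\eta(\cdot,t)$ supported in a ball of radius $r+n^{1/4}\int_{t_{0}}^{t}\sqrt{K}$ about $x_{0}$, equal to $1$ on a slightly smaller ball containing $B(x_{0},r/2)$. Test the evolution identity against $\eta^{2p}|A|^{2p-2}$ and differentiate $\int \eta^{2p}|A|^{2p}\,d\mu_{t}$ in time; the volume-form term $\partial_{t}d\mu_{t}=-H^{2}d\mu_{t}$ is harmless by the $|H|$ bound, while $H\,\mathrm{tr}(A^{3})$ is controlled pointwise by $|HA||A|^{2}\leq K|A|^{2}$. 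Integrating $\nabla^{2}H\cdot A$ by parts and invoking Codazzi's identity $\nabla^{i}A_{ij}=\nabla_{j}H$ manufactures a negative term $-2p\int \eta^{2p}|A|^{2p-2}|\nabla H|^{2}$; this negative piece is used, via Young's inequality with a small parameter, to absorb the cross terms $|A||\nabla\eta||\nabla H|$ and $|A||\nabla A||\nabla H|$ produced when the gradient falls on $\eta^{2p}$ and on $|A|^{2p-2}$. The outcome is a Gronwall-type differential inequality for $\int \eta^{2p}|A|^{2p}$ with coefficient of order $pK$.

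\textbf{Step 2 (Sobolev, Moser iteration, and tracking $K$).} Feed the energy inequality into the Michael--Simon Sobolev inequality for hypersurfaces of $\mathbb{R}^{n+1}$; the mean-curvature factor it introduces is harmless since $|H|\leq n^{1/4}\sqrt{K}$. A standard parabolic Moser iteration then upgrades an $L^{q}$ bound on $|A|$ at time $t_{0}$ to an $L^{\infty}$ bound on $B(x_{0},r/2)\cap \Sigma_{t}$; the hypothesis $q>n+2$ is the threshold needed to seed the iteration via parabolic Sobolev embedding on an $n$-dimensional hypersurface evolving in time. Since the constants appearing at each iteration depend polynomially in $p$, the telescoping geometric series produces the factors $(1+\|A\|_{L^{q}(B_{2r})}^{q})^{c}$ and $(1+\mathrm{Vol}_{g(t_{0})}(B_{2r}))^{c}$. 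Each application of Gronwall in time contributes a factor of the form $\exp(c\int K)$; accommodating the time-dependence of $K$ between iterations produces the nested exponential $\int_{t_{0}}^{t}\exp(\int_{t_{0}}^{s}(|K'|/K+cK))\,ds$, the $|K'|/K$ term recording the cost of re-choosing the cutoff region as $K$ itself evolves.

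The main obstacle is the energy estimate of Step 1, and specifically the cross term $\int \eta^{2p}|A|^{2p-1}|\nabla A||\nabla H|$ coming from the gradient falling on $|A|^{2p-2}$. Naively estimated, it produces an irremovable multiple of $\int \eta^{2p}|A|^{2p-2}|\nabla A|^{2}$ which is not directly dominated by the single negative $|\nabla H|^{2}$ term. One resolves this via a Kato inequality $|\nabla|A||\leq|\nabla A|$ together with a careful choice of Young exponents, or alternatively by writing $\nabla^{2}H\cdot A$ as a full divergence by using Codazzi a second time to extract extra cancellation. The critical requirement is that after all of this absorption the constants remain polynomial in $p$, so that the Moser iteration of Step 2 converges with the stated quantitative dependence; the remaining bookkeeping of cutoff displacement and Gronwall nesting is routine by comparison.
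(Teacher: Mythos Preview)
Your outline has the right architecture—an $L^{p}$ energy estimate followed by Moser iteration via Michael--Simon—but the energy estimate of Step 1 contains a genuine gap, and the fixes you propose do not close it.

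After integrating $\nabla^{2}H\cdot A$ by parts and using Codazzi, you are left (as you yourself note) with a residual $\int \eta^{2p}|A|^{2p-2}|\nabla A|^{2}$ that the negative $|\nabla H|^{2}$ term cannot absorb: the Kato inequality $|\nabla|A||\leq|\nabla A|$ points the wrong way, and a second use of Codazzi produces nothing new since the divergence of $A$ has already been exhausted. No choice of Young exponents will trade $|\nabla A|^{2}$ for $|\nabla H|^{2}$, so your claimed Gronwall inequality for $\int\eta^{2p}|A|^{2p}$ with coefficient of order $pK$ does not follow.

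The paper's resolution is different in kind. After one integration by parts it splits the cross term \emph{asymmetrically},
$$p(p-1)|A|^{p-2}|\nabla H||\nabla A|\ \leq\ \frac{1}{4K}\,|A|^{p}|\nabla H|^{2}\ +\ p^{4}K\,|A|^{p-4}|\nabla A|^{2},$$
and then eliminates \emph{both} gradient-squared pieces, not by absorption, but by invoking the parabolic identities
$$2|\nabla H|^{2}=(\Delta-\partial_{t})H^{2}+2H^{2}|A|^{2},\qquad 2|\nabla A|^{2}=(\Delta-\partial_{t})|A|^{2}+2|A|^{4}.$$
Testing these against $|A|^{p}$ and $|A|^{p-4}$ respectively converts the bad gradient terms into time derivatives $-\partial_{t}\int H^{2}|A|^{p}$ and $-\partial_{t}\int |A|^{p-2}$, plus remainders already of order $K\int|A|^{p}$. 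The correct energy is therefore the coupled quantity
$$U(t)=\int|A|^{p}+\frac{c}{K}\int H^{2}|A|^{p}+cK\int |A|^{p-2},$$
which satisfies $U'\leq(|K'|/K+cK)U$. In particular, the factor $|K'|/K$ arises from differentiating the $K$-dependent coefficients inside $U$, not (as you suggest) from re-selecting cutoff regions between Moser steps. Your Step 2 is essentially correct in spirit: the paper applies Moser iteration as a black box to $(\partial_{t}-\Delta)|A|^{2}\leq 2|A|^{2}\cdot|A|^{2}$ and then feeds in the spacetime $L^{q}$ control on $|A|$ produced by the $L^{p}$ estimate of Step 1.
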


This result provides a new proof of the blowup of $HA$ in \cite{[C]} and extends the estimates for the Ricci flow in terms of $Ric$ in \cite{[W2],[CW],[KMW]} to an estimate for the mean curvature flow in terms of $HA$. One of its direct corollaries is the following extension theorem in terms of $HA$.

\begin{theo}(Corollary \ref{cor:AH-extension})
    Let $\x:\Sigma^n\times[0,T)\rightarrow \mathbb{R}^{n+1}$ be a complete noncompact smooth mean curvature flow. Each time slice $\Sigma_t$ has bounded $HA$.
     There exists a positive constant $C=C(n,T,K,V,E,q)$ such that if
     \begin{enumerate}
\item[(1)] the $|HA|$ bound satisfies $$\sup_{t\in[0,T)}\sup_{\Sigma_t}|HA|(\cdot,t)\leq K<\infty;$$  
\item[(2)] the initial data satisfies a uniform volume bound 
\beqs
\sup_{x\in\Sigma_0}\mbox{Vol}_{g(0)}(B(x,1+n^{1/4}T\sqrt{K})\cap\Sigma_0)\leq V<\infty;
\eeqs
\item[(3)] the initial data satisfies an integral bound 
\beqs
\sup_{x\in\Sigma_0}\|A\|_{L^{q}(B(x,1)\cap\Sigma_0)}\leq E<\infty
\eeqs
for some $q>n+2$,
     \end{enumerate}
     then
     $$\limsup_{t\to T}\sup\limits_{\Sigma_t}|A|(\cdot,t)\leq C<\infty.$$
     In particular, the flow can be extended past time $T$.
\end{theo}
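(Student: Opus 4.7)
\textbf{Proof proposal for Corollary \ref{cor:AH-extension}.}

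The plan is to deduce the global bound on $|A|$ by applying Theorem \ref{theo:local est} once for each point $y\in\Sigma_t$, at a base point sitting on $\Sigma_0$ directly below $y$ in the flow, and to use the uniformity of (1)--(3) to obtain constants independent of $y$ and $t$.

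The first step is to translate (1) into a bound on the flow speed. From $|HA|=|H|\cdot|A|\leq K$ and the algebraic inequality $|H|^2\leq n|A|^2$, I would obtain $|H|^2\leq\sqrt{n}\,|HA|\leq\sqrt{n}\,K$, hence $|H|\leq n^{1/4}\sqrt{K}$ throughout $\Sigma\times[0,T)$; this is exactly the speed that appears in the radius term of Theorem \ref{theo:local est}. For any $y=\x(p,t)\in\Sigma_t$ this gives $|y-\x_0(p)|\leq n^{1/4}t\sqrt{K}\leq n^{1/4}T\sqrt{K}$. I would then set $x_0:=\x_0(p)\in\Sigma_0\subset\mathbb{R}^{n+1}$ and $r:=2(1+n^{1/4}T\sqrt{K})$, chosen so that $y\in B(x_0,r/2)\cap\Sigma_t$, and invoke Theorem \ref{theo:local est} on $[0,t]$. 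Because $K$ is time-independent, $|K'|/K\equiv 0$ and the temporal factor in the bound is dominated by $(e^{cKT}/(cK))^c$, a constant depending only on $n,T,K$. The theorem then yields
\beqs
|A|(y,t)\;\leq\;C_1\,\bigl(1+\|A\|_{L^q(B_{2r})}^q\bigr)^c\,\bigl(1+\mbox{Vol}_{g(0)}(B_{2r})\bigr)^c
\eeqs
with $C_1=C_1(n,T,K,q)$ and $B_{2r}\subset B(x_0,R)\cap\Sigma_0$, where $R:=2r+n^{1/4}T\sqrt{K}$ depends only on $n,T,K$.

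The last step is to upgrade the unit-scale hypotheses (2) and (3) to bounds over the larger set $B(x_0,R)\cap\Sigma_0$ by a covering argument. Since $R$ is a fixed constant once $n,T,K$ are fixed, $B(x_0,R)\subset\mathbb{R}^{n+1}$ can be covered by at most $N=N(n,T,K)$ Euclidean balls $B(y_i,1/2)$; for each such ball meeting $\Sigma_0$, I would choose $z_i\in B(y_i,1/2)\cap\Sigma_0$, so that
\beqs
B(y_i,1/2)\cap\Sigma_0\;\subset\;B(z_i,1)\cap\Sigma_0\;\subset\;B\bigl(z_i,1+n^{1/4}T\sqrt{K}\bigr)\cap\Sigma_0.
\eeqs
Hypotheses (3) and (2) then give $\|A\|_{L^q(B_{2r})}^q\leq NE^q$ and $\mbox{Vol}_{g(0)}(B_{2r})\leq NV$. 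Substituting these into the previous estimate produces a uniform bound $|A|(y,t)\leq C(n,T,K,V,E,q)$; since $y\in\Sigma_t$ and $t\in[0,T)$ were arbitrary, taking $\limsup_{t\to T}$ proves the first assertion, and the classical MCF continuation criterion (a uniform $|A|$-bound up to $T$ permits short-time extension) gives the extendability conclusion.

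The main obstacle I expect is making the covering step genuinely uniform in $y$. Since the base point $x_0=\x_0(p)$ moves with $y$, one must argue that the covering radius $R$ and the count $N$ depend only on the fixed data $n,T,K$ and not on $y$ or on the specific piece of $\Sigma_0$ landing in $B(x_0,R)$; this is the reason for using an extrinsic Euclidean covering of a ball in $\mathbb{R}^{n+1}$, whose combinatorics is translation-invariant, rather than any intrinsic covering of $\Sigma_0$. A minor secondary point is that choosing $r$ to depend on $T,K$ forces the constant $C_1$ from Theorem \ref{theo:local est} to inherit that dependence, but this is already permitted by the parameter list declared in the statement of the corollary.
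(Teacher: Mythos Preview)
Your proposal is correct and is essentially an unpacking of the paper's one-line proof (``It suffices to take $r=1$ in Theorem~\ref{theo:local est}''): both arguments apply the local $L^\infty$ estimate at every spatial point with a fixed radius, use the constancy of $K$ to kill the time factor, and appeal to the uniformity of the initial data. The only cosmetic difference is your choice $r=2(1+n^{1/4}T\sqrt{K})$ so that the center $x_0$ can be taken on $\Sigma_0$, followed by an explicit Euclidean covering to reduce $B_{2r}$ to unit balls; the paper's terse ``$r=1$'' leaves this recentering/covering step implicit.
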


Combining the $L^\infty$ estimate and the blowup estimate of $A$ in \cite{[H]}, we also obtain the following estimate of the blowup rate of $HA$ at the first finite singularity. This result generalizes Theorem 1.2 of \cite{[LS3]} and Theorem 5.1 of \cite{[C]} and can be seen as another version of Theorem 1.1 of \cite{[W2]} and Theorem 2 of \cite{[KMW]}.
\begin{theo}(Theorem \ref{theo:HA-blowup})
    Let $\x:\Sigma^n\times[0,T)\rightarrow \mathbb{R}^{n+1}$ be a complete smooth mean curvature flow with a finite maximal time T. Each time slice $\Sigma_t$ has bounded second fundamental form. Then there exists a positive constant $\epsilon=\epsilon(n)$ such that
    $$\limsup_{t\to T}\Big(|T-t|\sup_{\Sigma_t}|HA|\Big)\geq\epsilon.$$
\end{theo}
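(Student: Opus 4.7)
The plan is to derive a contradiction by combining the local $L^\infty$ estimate of Theorem \ref{theo:local est} with the classical Huisken lower bound $\sup_{\Sigma_t}|A|^2\geq 1/(2(T-t))$ obtained from the maximum principle for $\partial_t|A|^2\leq \Delta|A|^2 + 2|A|^4$ in \cite{[H]}. Heuristically, if $(T-t)\sup|HA|$ were uniformly small as $t\to T$, then Theorem \ref{theo:local est} would force $|A|$ to blow up strictly slower than the type-I rate $(T-t)^{-1/2}$, contradicting the fact that $T$ is the maximal time.

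Concretely, I would suppose for contradiction that $\limsup_{t\to T}(T-t)\sup_{\Sigma_t}|HA|<\epsilon$ for some $\epsilon>0$ to be fixed later. Pick $t_*\in[0,T)$ with $\sup_{\Sigma_t}|HA|\leq\epsilon/(T-t)$ on $[t_*,T)$, and fix $t_0\in(t_*,T)$, $x_0\in\RR^{n+1}$, $r>0$. For each $t_1\in(t_0,T)$ I would apply Theorem \ref{theo:local est} on $[t_0,t_1]$ with the majorant $K(s):=\epsilon/(T-s)$. Since
$$\frac{|K'(s)|}{K(s)}+cK(s)=\frac{1+c\epsilon}{T-s},$$
a direct calculation yields
$$\int_{t_0}^{t_1}\exp\!\Big(\!\int_{t_0}^{s}\!(|K'|/K+cK)\,d\tau\Big)\,ds \ =\ \int_{t_0}^{t_1}\!\Big(\frac{T-t_0}{T-s}\Big)^{1+c\epsilon}\,ds \ \leq\ \frac{(T-t_0)^{1+c\epsilon}}{c\epsilon}\,(T-t_1)^{-c\epsilon}.$$
Since $|A(\cdot,t_0)|$ is bounded on $\Sigma_{t_0}$ by hypothesis, $\|A\|_{L^q(B_{2r})}$ and $\vol_{g(t_0)}(B_{2r})$ are finite, and Theorem \ref{theo:local est} produces
$$\sup_{B(x_0,r/2)\cap\Sigma_{t_1}}|A|\ \leq\ C_0\,(T-t_1)^{-c^2\epsilon},$$
with $C_0$ depending only on $n,q,r,\epsilon,T-t_0$ and the initial data on $B(x_0,2r)\cap\Sigma_{t_0}$.

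Next I would globalize this bound. In the compact case one chooses $x_0$ and $r$ so that $B(x_0,r/2)\supset\Sigma_{t_1}$ for every $t_1\in[t_0,T)$, giving a global bound directly; in the general complete case one chooses $x_0=x_0(t_1)$ near a spatial near-maximizer of $|A(\cdot,t_1)|$, the standing bound on $|A(\cdot,t_0)|$ providing uniform control on the local initial data and hence uniformity of $C_0$ in the base point. Either way one obtains
$$\sup_{\Sigma_{t_1}}|A|\ \leq\ C_0\,(T-t_1)^{-c^2\epsilon}.$$
Comparing with Huisken's lower bound $\sup_{\Sigma_{t_1}}|A|\geq 1/\sqrt{2(T-t_1)}$ yields $C_0(T-t_1)^{1/2-c^2\epsilon}\geq 1/\sqrt{2}$.

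Fixing $q:=n+3$ so that $c=c(n)$ alone, and choosing $\epsilon=\epsilon(n):=1/(4c^2)$, the exponent $1/2-c^2\epsilon=1/4$ is positive, so the left-hand side tends to $0$ as $t_1\to T$, yielding the desired contradiction. The main technical obstacle is the globalization step: in the complete noncompact setting one must exploit the bounded second fundamental form on $\Sigma_{t_0}$ (together with a local volume comparison for hypersurfaces with bounded $|A|$) to keep the initial $L^q$-norm and volume uniformly controlled on extrinsic balls of fixed radius centered at an arbitrary near-maximizer, so that the dimensionless constant $C_0$ does not degenerate as the base point varies with $t_1$.
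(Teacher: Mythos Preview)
Your proposal is correct and follows essentially the same route as the paper: assume $(T-t)\sup|HA|<\epsilon$, plug $K(t)=\epsilon/(T-t)$ into Theorem~\ref{theo:local est}, evaluate the resulting integral to get $\sup|A|\le C_0(T-t)^{-c\epsilon}$, and contradict the Huisken lower bound $\sup|A|\ge (2(T-t))^{-1/2}$ by taking $\epsilon$ small depending only on $n$. Your treatment of the globalization step in the noncompact case (choosing $x_0$ near a near-maximizer and invoking bounded $|A|$ on $\Sigma_{t_0}$ for uniform local volume and $L^q$ control) is in fact more explicit than the paper's, which simply records that ``$\Sigma_{t_0}$ has bounded geometry.''
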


The organization of this paper is as follows. In Sect.2 we recall some basic results on mean curvature flow. In Sect.3 we develop $L^p$ estimate in terms of initial data and $|HA|$ bound, following the argument in \cite{[KMW]}. In Sect.4 we establish the $L^\infty$ estimate by Moser iteration as in \cite{[LS2]} and then derive the extension theorem. In Sect.5 we estimate the blowup rate of $HA$, using the $L^{\infty}$ estimate and the blowup estimate of $A$.

{\bf Acknowledgements}: The author would like to thank H.Z.Li for insightful discussions.

\section{Preliminaries}
Let $\x(p,t):\Sigma^n\to\mathbb{R}^{n+1}$ be a family of smooth immersions. $\{(\Sigma^n,\x(\cdot,t)),0\leq t< T\}$ is called a mean curvature flow if $\x$ satisfies
\beqn
\partial_{t}\x=-H\n,\quad\forall\,t\in[0,T),\label{eq0}
\eeqn
where $A=(h_{ij})$ denotes the second fundamental form and $H=g^{ij}h_{ij}$ denotes the mean curvature. $\Sigma_t:=\x_{t}(\Sigma)$ denotes the time slice of the flow for $t\in[0,T)$.

Some equations are listed here for later calculations. See \cite{[H]} or \cite{[M]} for details.
\begin{lem}\label{lem:equalities for calculation}(Sect.3 of \cite{[H]})
    Along the mean curvature flow,
    \begin{eqnarray}
    &&\partial_{t} d\mu=-H^2 d\mu,\nonumber\\
    &&\partial_{t}|A|^2=2(\nabla^{2}H\cdot A+ H|A^3|),\label{eqn1}\\
    &&2|\nabla H|^2=(\Delta-\partial_{t})H^2+2H^2|A|^2,\label{eqn2}\\
    &&2|\nabla A|^2=(\Delta-\partial_{t})|A|^2+2|A|^4.\label{eqn3}
\end{eqnarray}
where
$$\nabla^2H\cdot A=H_{ij}A_{ij},\quad |A^3|=A_{ij}A_{jk}A_{ki}.$$
\end{lem}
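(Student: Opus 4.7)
The plan is to derive all four identities from the elementary evolution equations of the induced metric, the unit normal, and the second fundamental form, combined with Simons' identity. Since the ambient space is Euclidean, the standard computations (as in Huisken \cite{[H]} or Mantegazza \cite{[M]}) give the basic evolutions
\[
\partial_t g_{ij}=-2H\,h_{ij},\qquad \partial_t g^{ij}=2H\,h^{ij},\qquad \partial_t\mathbf{n}=\nabla H,
\]
\[
\partial_t h_{ij}=\nabla_i\nabla_j H-H\,h_{ik}h^k{}_j,\qquad \partial_t H=\Delta H+H|A|^2.
\]
I would take these as the starting point.

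For the volume form, I would write $d\mu=\sqrt{\det g}\,dx$ and compute
\[
\partial_t\log\sqrt{\det g}=\tfrac12 g^{ij}\partial_t g_{ij}=-H\,g^{ij}h_{ij}=-H^2,
\]
which gives $\partial_t d\mu=-H^2\,d\mu$. For the evolution of $|A|^2=g^{ik}g^{jl}h_{ij}h_{kl}$, I would differentiate the product: the two $\partial_t g^{ij}$ terms each contribute $2H\,|A^3|$, and the two $\partial_t h_{ij}$ terms together contribute $2(\nabla^2H\cdot A-H|A^3|)$. Adding gives the desired $\partial_t|A|^2=2(\nabla^2 H\cdot A+H|A^3|)$.

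For the Bochner-type identities, the key ingredient is Simons' identity in a hypersurface of $\mathbb R^{n+1}$,
\[
\Delta h_{ij}=\nabla_i\nabla_j H+H\,h_{ik}h^k{}_j-|A|^2 h_{ij},
\]
which follows from commuting covariant derivatives on $h_{ij}$ using the Codazzi equation $\nabla_k h_{ij}=\nabla_i h_{jk}$ and the Gauss equation for the intrinsic curvature in terms of $A$. Contracting with $h^{ij}$ gives $h^{ij}\Delta h_{ij}=\nabla^2H\cdot A+H|A^3|-|A|^4$, so
\[
\Delta|A|^2=2h^{ij}\Delta h_{ij}+2|\nabla A|^2=2\nabla^2H\cdot A+2H|A^3|-2|A|^4+2|\nabla A|^2.
\]
Combining with the formula for $\partial_t|A|^2$ above produces $2|\nabla A|^2=(\Delta-\partial_t)|A|^2+2|A|^4$.

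For $H^2$, I would use $\partial_t H=\Delta H+H|A|^2$ to get $\partial_t H^2=2H\Delta H+2H^2|A|^2$, and the product rule gives $\Delta H^2=2H\Delta H+2|\nabla H|^2$. Subtracting yields $(\Delta-\partial_t)H^2=2|\nabla H|^2-2H^2|A|^2$, which is the stated formula. The only nontrivial step is Simons' identity; everything else is bookkeeping with the basic evolution equations, so I do not anticipate a genuine obstacle — this lemma is recorded for later reference rather than for originality.
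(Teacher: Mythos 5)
Your derivation is correct: the paper offers no proof of this lemma, citing only Sect.~3 of \cite{[H]}, and your route --- the basic evolution equations $\partial_t g_{ij}=-2Hh_{ij}$, $\partial_t h_{ij}=\nabla_i\nabla_j H-Hh_{ik}h^k{}_j$, $\partial_t H=\Delta H+|A|^2H$, together with Simons' identity contracted with $h^{ij}$ --- is exactly the standard argument in that reference (and in \cite{[M]}). The bookkeeping checks out, including the $4H|A^3|$ from the metric terms cancelling against $-2H|A^3|$ from $\partial_t h_{ij}$ to give (\ref{eqn1}), so nothing further is needed.
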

Observe that the equation (\ref{eq0}) is invariant under the rescaling 
$$\tilde{\x}(p,t)=\lambda \Big(\x(p,t_0+\frac{t}{\lambda^2})-x_0\Big),\quad\forall\,(p,t)\in \Sigma\times[-\lambda^2t_0,\lambda^2T-\lambda^2t_0],$$
for some $(x_0,t_0)\in \mathbb{R}^{n+1}\times[0,T)$ and $\lambda>0$. Under the rescaling we have
$$\tilde{A}=\lambda A,\quad \tilde{g}=\lambda^2 g,\quad |\tilde A|=\lambda^{-1}|A|,\quad \tilde{H}=\lambda^{-1}H,$$
where $\tilde{A}$, $\tilde{g}$ and $\tilde{H}$ denote the second fundamental form, the induced metric and the mean curvature of the hypersurface $\tilde{\x}(\Sigma)$ respectively. Moreover, we have
\begin{lem}
Under the rescaling,
$$\mbox{Vol}_{\tilde{g}}\big(B(x_0,\tilde{r})\cap \tilde{\Sigma}_{\tilde{t}}\big)= \lambda^{n} \mbox{Vol}_{g}\big(B(x_0,r)\cap \Sigma_t)\big),$$
$$\int_{B(x_0,\tilde{r})\cap \tilde{\Sigma}_{\tilde{t}}}|\tilde{A}|^p d\tilde{\mu}=\lambda^{n-p} \int_{B(x_0,r)\cap \Sigma_t}|A|^p d\mu,$$
$$\int_{\tilde{t_1}}^{\tilde{t_2}}\int_{B(x_0,\tilde{r})\cap \tilde{M}_{\tilde{t}}}|\tilde{A}|^p d\tilde{\mu}d\tilde{t}
=\lambda^{n+2-p} \int_{t_1}^{t_2}\int_{B(x_0,r)\cap \Sigma_t}|A|^p d\mu dt,$$
where $\tilde{r}=\lambda r$ and $\tilde{t}=t_0+t/\lambda^2$. 
\end{lem}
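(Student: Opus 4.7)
Since the pointwise scaling identities $\tilde g = \lambda^2 g$, $\tilde A = \lambda A$, and consequently $|\tilde A| = \lambda^{-1}|A|$ are already recorded in the paragraph immediately preceding the lemma, the plan is to obtain the three assertions as direct change-of-variables computations, with no genuine analytical content. First I would set up the underlying diffeomorphism: the ambient map $y \mapsto \lambda(y-x_0)$ is a translation followed by a dilation, under which the time slice $\Sigma_s$ (at original time $s = t_0 + \tilde t/\lambda^2$) is carried bijectively onto $\tilde\Sigma_{\tilde t}$, and the Euclidean ball $B(x_0,r)$ is sent to a ball of radius $\lambda r$, which after the obvious recentering is the ball denoted $B(x_0,\tilde r)$ in the statement. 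Hence $B(x_0,r)\cap\Sigma_s$ is in bijection with $B(x_0,\tilde r)\cap\tilde\Sigma_{\tilde t}$.

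Next, from $\tilde g = \lambda^2 g$ the induced volume forms satisfy $d\tilde\mu = \lambda^n d\mu$ pointwise on each time slice, since $\sqrt{\det \tilde g} = \lambda^n \sqrt{\det g}$. Integrating this pointwise relation across the bijection of balls yields the volume identity
\[
\vol_{\tilde g}\big(B(x_0,\tilde r)\cap\tilde\Sigma_{\tilde t}\big) = \int_{B(x_0,r)\cap\Sigma_s} \lambda^n\,d\mu = \lambda^n\,\vol_g\big(B(x_0,r)\cap\Sigma_s\big),
\]
which is the first claim. The second follows by applying the same change of variables to the integrand $|\tilde A|^p d\tilde\mu$: combining $|\tilde A|^p = \lambda^{-p}|A|^p$ with $d\tilde\mu = \lambda^n d\mu$ gives the pointwise relation $|\tilde A|^p d\tilde\mu = \lambda^{n-p}|A|^p d\mu$, which integrates to the asserted equality.

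For the third identity I would add the time change of variables induced by the reparametrization $s = t_0 + \tilde t/\lambda^2$, whose Jacobian is $d\tilde t = \lambda^2 ds$. Tensoring the spatial identity just obtained with this extra factor produces the overall scaling $\lambda^{n-p}\cdot\lambda^2 = \lambda^{n+2-p}$, which matches the claim. The proof is essentially bookkeeping; the only points deserving any care are consistency in the ambient-ball convention (the dilation produces a ball of radius $\lambda r$, which the statement identifies with $B(x_0,\tilde r)$ after recentering at $x_0$) and the correct sign of the time Jacobian $d\tilde t = \lambda^2 ds$. There is no real analytic obstacle here.
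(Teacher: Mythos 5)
Your proposal is correct: the paper states this lemma without proof, treating it as the immediate change-of-variables bookkeeping you carry out, namely $d\tilde\mu=\lambda^n d\mu$ from $\tilde g=\lambda^2 g$, $|\tilde A|=\lambda^{-1}|A|$, and the time Jacobian $d\tilde t=\lambda^2\,dt$, giving the factors $\lambda^n$, $\lambda^{n-p}$, $\lambda^{n+2-p}$. Your remark about the recentering of the ball (the dilation $y\mapsto\lambda(y-x_0)$ sends $B(x_0,r)$ to a ball of radius $\lambda r$ about the origin, which the paper still denotes $B(x_0,\tilde r)$) correctly identifies the only notational subtlety, so nothing is missing.
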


The standard Moser iteration in Sect.4 depends on the Michael-Simon inequality. See \cite{[H],[MS]}.
\begin{lem}\label{lem:MS Sobolev}(Lemma 5.7 of \cite{[H]})
For any Lipschitz function $f$ with compact support on a hypersurface $\Sigma^n\subset\mathbb{R}^{n+1}$, 
\beqn
\big(\int_{\Sigma}|f|^{\frac{n}{n-1}}d\mu\big)^{\frac{n-1}{n}}\leq c_n \int_{\Sigma}(|\nabla f|+H|f|)d\mu.\label{ineq:MS Sobolev}
\eeqn
\end{lem}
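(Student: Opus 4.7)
The plan is to follow the classical Michael-Simon strategy: reduce the $L^{n/(n-1)}$ Sobolev inequality to an isoperimetric-type inequality on the submanifold via a layer-cake / co-area argument, and prove the isoperimetric inequality by a first-variation argument on $\mathbb{R}^{n+1}$. First I would replace $f$ by $|f|$, using $|\nabla |f|| \leq |\nabla f|$ a.e., and reduce to the case of $f \geq 0$ smooth with compact support by standard approximation.

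The geometric core is a density estimate derived from the first variation of area. Taking the test vector field $X(x) = (x-x_0)\varphi(|x-x_0|)$ on $\mathbb{R}^{n+1}$ and applying the tangential divergence formula on $\Sigma$,
$$\int_\Sigma \mathrm{div}_\Sigma X\, d\mu = -\int_\Sigma \langle X, H\mathbf{n}\rangle\, d\mu,$$
then choosing $\varphi(s) = \max(s^{-n}, r^{-n}) - r^{-n}$, one obtains the by-now-standard monotonicity-with-error formula of the shape
$$\frac{d}{dr}\Big(r^{-n}\mu(\Sigma \cap B_r(x_0))\Big) \geq -c\, r^{-n}\int_{\Sigma \cap B_r(x_0)} |H|\, d\mu.$$
Letting $r \to 0^+$ at a point of $\Sigma$ yields a uniform density lower bound $\mu(\Sigma \cap B_r(x_0)) \geq c_n r^n$ on balls for which $r^{1-n}\int_{\Sigma \cap B_r}|H|\, d\mu$ is small. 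A Vitali/covering argument applied to the topological boundary of any precompact open set $E \subset \Sigma$ then upgrades this pointwise density bound to the submanifold isoperimetric inequality
$$\mu(E)^{(n-1)/n} \leq c_n\Big(\mathcal{H}^{n-1}(\partial E) + \int_E |H|\, d\mu\Big).$$

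With the isoperimetric inequality in hand, I apply it to the superlevel sets $E_t = \{f > t\}$. The layer-cake identity
$$\int_\Sigma f^{n/(n-1)}\, d\mu = \tfrac{n}{n-1}\int_0^\infty t^{1/(n-1)}\mu(E_t)\, dt,$$
combined with the isoperimetric bound on $\mu(E_t)$ and the co-area formula $\int_0^\infty \mathcal{H}^{n-1}(\partial E_t)\, dt = \int_\Sigma |\nabla f|\, d\mu$, supplies the $\int_\Sigma |\nabla f|\, d\mu$ term in \eqref{ineq:MS Sobolev}; Fubini on $\int_0^\infty \int_{E_t} |H|\, d\mu\, dt = \int_\Sigma |H| f\, d\mu$ supplies the $\int_\Sigma H|f|\, d\mu$ term. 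A short Hardy-type rearrangement absorbs the $t^{1/(n-1)}$ weight and produces the stated inequality.

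The main obstacle is the density/isoperimetric step: one must carefully justify the first-variation identity with the singular radial weight (exploiting the nonnegativity of $|(x-x_0)^\perp|^2$ in the orthogonal decomposition of the position vector to produce a sign in the monotonicity formula) and then extract a clean isoperimetric statement via a Vitali cover of $\partial E$. Once these geometric pieces are in place, the passage from the isoperimetric inequality to the Sobolev inequality is a routine slicing/integration argument.
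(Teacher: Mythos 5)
This lemma is not proved in the paper at all: it is quoted verbatim as Lemma 5.7 of Huisken \cite{[H]}, which in turn rests on Michael--Simon \cite{[MS]}, so there is no internal argument to compare yours against; what you have written is an outline of the classical Michael--Simon proof itself. At that level your global strategy (first variation with radial test fields, a monotonicity-type inequality with an $|H|$ error, then co-area/layer-cake to pass from sets to functions) is the right one, and the final slicing step is indeed routine (Minkowski's integral inequality or the Federer--Fleming trick handles the $t^{1/(n-1)}$ weight, and Fubini gives the $\int |H||f|$ term).

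However, the geometric core is not correct as you have stated it, in two places. First, the density lower bound does not follow by ``letting $r\to 0^+$'' under smallness of $r^{1-n}\int_{\Sigma\cap B_r}|H|\,d\mu$ at a single scale: integrating your differential inequality from $0$ to $r$ produces the error $\int_0^r s^{-n}\big(\int_{\Sigma\cap B_s}|H|\,d\mu\big)\,ds$, and the bound $\int_{\Sigma\cap B_s}|H|\,d\mu\le\epsilon\,s^{n-1}$ only gives $\epsilon\int_0^r s^{-1}ds$, which diverges logarithmically; this is precisely why Michael--Simon do not extract a clean density bound but instead run a stopping-time selection over all scales simultaneously, with the function $f$ built into the monotone quantity $\rho^{-n}\int_{B_\rho}f\,d\mu$. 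Second, the claimed upgrade ``Vitali cover of the topological boundary $\partial E$ $\Rightarrow$ isoperimetric inequality'' is not a valid reduction: covering $\partial E$ controls $\mathcal{H}^{n-1}(\partial E)$ but by itself says nothing about $\mu(E)$, and the density bound you invoke concerns $\mu(\Sigma\cap B_r)$, not $\mu(E\cap B_r)$ (an extrinsic ball around $x\in E$ may meet $\Sigma\setminus E$ in a different connected component without meeting $\partial E$). To make this step work one must apply the first-variation identity to the region $E$ itself, so that a boundary term $\mathcal{H}^{n-1}(\partial E\cap B_r)$ appears in the monotonicity inequality, and then run a Besicovitch-type covering/stopping-time argument over points of $E$ (choosing for each $x\in E$ the radius at which the density ratio of $E$ first drops below a fixed fraction), or else bypass the set-level isoperimetric inequality entirely and argue directly with $f$ as in \cite{[MS]}. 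As it stands, the two steps you describe as ``the main obstacle'' are exactly where the proof lives, and your sketch does not yet supply them.
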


By maximum principle the second fundamental form blows up at least at a rate of $1/2$, which holds for noncompat cases as well.
\begin{lem}\label{lem:A blowup}(Proposition 2.4.6 of \cite{[M]})
Suppose each time slice $\Sigma_t$ has bounded second fundamtental form. Then 
$$\sup_{\Sigma_t}|A|\geq \frac{1}{\sqrt{2(T-t)}}.$$
\end{lem}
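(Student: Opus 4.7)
The plan is to compare $M(t):=\sup_{\Sigma_t}|A|^2$ with the ODE $\phi'=2\phi^2$, whose solutions blow up in finite time, and then use the maximality of $T$ to pin down where the comparison ODE must blow up. The key ingredient will be the pointwise evolution inequality obtained from Lemma \ref{lem:equalities for calculation}.

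First I would combine equations (\ref{eqn1}) and (\ref{eqn3}) — or simply rewrite (\ref{eqn3}) — to get the pointwise reaction-diffusion inequality
\beqs
\partial_t|A|^2 \;=\; \Delta|A|^2 - 2|\nabla A|^2 + 2|A|^4 \;\leq\; \Delta|A|^2 + 2|A|^4
\eeqs
on $\Sigma_t$. Fixing $t_0\in[0,T)$ and a small $\epsilon>0$, I would introduce the solution $\phi(t)$ to $\phi'=2\phi^2$ with $\phi(t_0)=M(t_0)+\epsilon$, explicitly
\beqs
\phi(t)=\frac{M(t_0)+\epsilon}{1-2(t-t_0)(M(t_0)+\epsilon)},
\eeqs
defined on $[t_0,T^\ast)$ with blowup time $T^\ast=t_0+\tfrac{1}{2(M(t_0)+\epsilon)}$.

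Next I would apply a weak maximum principle on the noncompact hypersurface to the subsolution $u:=|A|^2-\phi(t)$ of the linear heat-type inequality $\partial_t u\leq \Delta u + c(x,t)u$, where $c$ is bounded on any time-slab where $M$ is bounded. Since $u\leq -\epsilon<0$ at $t=t_0$ and each $\Sigma_t$ has bounded $|A|$ (so the maximum principle of Ecker-Huisken type, or a direct spatial cutoff argument, applies on short slabs), I would conclude $|A|^2(\cdot,t)\leq \phi(t)$ for all $t\in[t_0,\min\{T,T^\ast\})$, and in particular $M(t)\leq \phi(t)$.

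Finally I would derive the desired lower bound by contradiction: if $T^\ast>T$, then $M$ remains bounded on $[t_0,T)$, and standard short-time existence for mean curvature flow with bounded second fundamental form would extend the flow past $T$, contradicting maximality. Thus $T^\ast\leq T$, which reads $t_0+\tfrac{1}{2(M(t_0)+\epsilon)}\leq T$; letting $\epsilon\downarrow 0$ rearranges to $M(t_0)\geq \tfrac{1}{2(T-t_0)}$, i.e., $\sup_{\Sigma_{t_0}}|A|\geq 1/\sqrt{2(T-t_0)}$. The main obstacle is the maximum principle step in the noncompact setting: without the hypothesis of bounded $|A|$ on each slice one cannot directly compare $|A|^2$ to an ODE solution, and even with it one must ensure the comparison propagates over the full interval $[t_0,\min\{T,T^\ast\})$; invoking Ecker-Huisken's maximum principle for complete hypersurfaces (or running a spatial cutoff $\eta_R$ and taking $R\to\infty$, controlling the boundary terms by the slicewise bound on $|A|$) is the cleanest way to handle this.
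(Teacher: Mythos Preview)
The paper does not supply its own proof of this lemma; it is quoted verbatim as Proposition~2.4.6 of Mantegazza \cite{[M]} and left unproved. Your argument is correct and is precisely the standard one found in that reference: derive the reaction-diffusion inequality $(\partial_t-\Delta)|A|^2\leq 2|A|^4$ from (\ref{eqn3}), compare with the ODE $\phi'=2\phi^2$ via the maximum principle (using the slicewise bound on $|A|$ to justify it in the noncompact setting), and conclude by maximality of $T$. There is nothing to contrast here.
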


\section{$L^p$ estimate}

Throughout this paper $c$ denotes nonnegative constants depending only on $n$ and $p$ while $c_n$ denotes nonnegative constants depending only on $n$, which may change from line to line.

\begin{theo}\label{theo:global Lp est}
    Let $\x:\Sigma^n\times[t_0,t_1]\to\mathbb{R}^{n+1}$ be a closed smooth mean curvature flow satisfying the uniform bound 
    \begin{eqnarray}
        \sup_{\Sigma_t}|HA|(\cdot,t)\leq K(t),\quad \forall\,t\in[t_0,t_1].\label{AH bound}
    \end{eqnarray}
    Then for any $p\geq 4$ there exist positive constants $c=c(n,p)$ such that for any $t\in[t_0,t_1]$,
    $$\int_{\Sigma_t}|A|^p
    \leq \Big(\int_{\Sigma_{t_0}}|A|^p(t_0)+cK(t_0)\int_{\Sigma_{t_0}}|A|^{p-2}(t_0)\Big)e^{\int^{t}_{t_0}(|K'|/K+cK)}.$$
\end{theo}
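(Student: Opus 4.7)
The plan is to derive a Gronwall-type differential inequality for the modified functional
\[
F(t) := \int_{\Sigma_t}|A|^p\,d\mu + c\,K(t)\!\int_{\Sigma_t}|A|^{p-2}\,d\mu,
\]
of the shape $F'(t)\le(|K'|/K+cK)F(t)$, after which the stated estimate follows by integrating on $[t_0,t]$. First I would differentiate $\int|A|^p\,d\mu$ in $t$ using Lemma \ref{lem:equalities for calculation}, obtaining
\[
\frac{d}{dt}\!\int|A|^p\,d\mu = p\!\int|A|^{p-2}\,\nabla^2 H\cdot A\,d\mu + p\!\int|A|^{p-2}\,H|A^3|\,d\mu - \int|A|^p\,H^2\,d\mu.
\]
The pointwise bound $|H||A^3|\le|HA||A|^2\le K|A|^2$ controls the middle term by $pK\int|A|^p$, and the third term is nonpositive.

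The first integral is the crux. A single integration by parts combined with the Codazzi identity $\nabla_j A_{ij}=\nabla_i H$ converts it to a first-derivative expression
\[
\int|A|^{p-2}A_{ij}H_{ij}\,d\mu = -(p-2)\!\int|A|^{p-3}(A_{ij}\nabla_i|A|)\nabla_j H\,d\mu - \int|A|^{p-2}|\nabla H|^2\,d\mu.
\]
Cauchy--Schwarz gives the pointwise bound $|A_{ij}\nabla_i|A|\nabla_j H|\le|A|\,|\nabla|A||\,|\nabla H|$, and Young's inequality with weight tuned so that half of the $\int|A|^{p-2}|\nabla H|^2$ integral is absorbed then yields
\[
p\!\int|A|^{p-2}A_{ij}H_{ij}\,d\mu \le -\tfrac{p}{2}\!\int|A|^{p-2}|\nabla H|^2\,d\mu + \tfrac{p(p-2)^2}{2}\!\int|A|^{p-2}|\nabla|A||^2\,d\mu.
\]
To handle the residual positive gradient integral I would invoke Kato's inequality $|\nabla|A||^2\le|\nabla A|^2$ together with (\ref{eqn3}), integrate $\Delta|A|^2$ by parts, and use $|A|^{p-2}\partial_t|A|^2 = \frac{2}{p}\partial_t|A|^p$ so that the resulting time derivative is absorbed into the left-hand side; what remains on the right is a multiple of $\int|A|^{p+2}\,d\mu$.

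The main obstacle is precisely this residual $\int|A|^{p+2}$ integral, whose degree exceeds the one we wish to control. My plan is to handle it via the modified functional $F(t)$: repeating the computation above with $p$ replaced by $p-2$ shows that $(\int|A|^{p-2})'$ produces, among other things, a term proportional to $\int|A|^p$, which multiplied by $cK$ provides an extra $cK\int|A|^p$ summand on the right-hand side of $F'$. The surviving $\int|A|^{p+2}$ is absorbed by applying the Michael--Simon inequality (Lemma \ref{lem:MS Sobolev}) to $f=|A|^{p/2+1}$, the mean-curvature term $H|f|$ being controlled by $K\int|A|^{p/2}$ thanks to $|HA|\le K$ and the resulting Sobolev bound interpolating $\int|A|^{p+2}$ between the negative $\int|A|^{p-2}|\nabla|A||^2$ contribution already available and the harmless $K^2\int|A|^{p-2}$. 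A final round of Young's inequality, together with the fact that $\partial_t K(t)$ generates the $|K'|/K$ summand when one differentiates the weight of $F$, produces the clean Gronwall inequality $F'(t)\le(|K'|/K+cK)F(t)$; integration on $[t_0,t]$ yields the claimed bound.
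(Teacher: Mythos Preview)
Your proposal has a genuine structural gap at the step where you try to close the estimate after the $\int|A|^{p+2}$ term appears. After your integration by parts and Young's inequality you arrive at
\[
\partial_t\int|A|^p \le -\tfrac{p}{2}\!\int|A|^{p-2}|\nabla H|^2 + c_p\!\int|A|^{p-2}|\nabla|A||^2 + pK\!\int|A|^p,
\]
and after feeding the middle term through (\ref{eqn3}) the residual $\int|A|^{p+2}$ carries a coefficient depending only on $p$, \emph{not} on $K$. At this point there is no surviving negative term of the form $-\int|A|^{p-2}|\nabla|A||^2$ to absorb anything: you have used it up in the (\ref{eqn3}) identity, and the only remaining negative contribution is $-\int|A|^{p-2}|\nabla H|^2$, which involves the gradient of $H$, not of $|A|$, and hence is useless for a Sobolev inequality applied to powers of $|A|$. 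The Michael--Simon inequality applied to $f=|A|^{p/2+1}$ does not yield an interpolation of the form $\int|A|^{p+2}\le \varepsilon\int|A|^{p-2}|\nabla|A||^2 + C(\varepsilon)K^2\int|A|^{p-2}$; it produces a higher $L^q$ norm of $|A|$ on the left and requires a gradient term on the right that you no longer possess. Your functional $F$ is therefore missing a piece and the Gronwall inequality cannot be closed along this route.

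The paper avoids the super-critical $\int|A|^{p+2}$ altogether by two devices you did not use. First, it applies Young's inequality to $\int|A|^{p-2}|\nabla H||\nabla A|$ with $K$-dependent weights, splitting it as $\tfrac{1}{4K}\int|A|^{p}|\nabla H|^2 + cK\int|A|^{p-4}|\nabla A|^2$; the exponent $p-4$ (rather than your $p-2$) in front of $|\nabla A|^2$ means that after invoking (\ref{eqn3}) one lands on $\int|A|^{p}$, not $\int|A|^{p+2}$. Second, the price for this is a \emph{positive} term $\tfrac{1}{K}\int|A|^{p}|\nabla H|^2$, which the paper handles by the evolution equation (\ref{eqn2}) for $H^2$; this produces a total time derivative $-\tfrac{1}{K}\partial_t\int H^2|A|^p$. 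Consequently the correct Lyapunov functional is
\[
U(t)=\int|A|^p+\tfrac{1}{K}\int H^2|A|^p+cK\int|A|^{p-2},
\]
with the middle summand essential. Differentiating the $K$-dependent weights of $U$ is exactly what generates the $|K'|/K$ term in the Gronwall bound. No Sobolev inequality is used (or needed) in this section; Michael--Simon enters only later, in the $L^\infty$ step via Moser iteration.
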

\begin{proof}
    
Using equation (\ref{eqn1}) from Lemma \ref{lem:equalities for calculation}, we get
\beqn
\partial_{t}\int_{\Sigma_t}|A|^{p}&=&p\int_{\Sigma_t}|A|^{p-2}(\nabla^2H A+H|A^3|)-\int_{\Sigma_t}|A|^{p}H^2\nonumber\\
&\leq& p(p-1)\int_{\Sigma_t}|A|^{p-2}|\nabla H||\nabla A|+p\int_{\Sigma_t}|H||A|^{p+1}\nonumber\\
&\leq& \frac{1}{4K}\int_{\Sigma_t}|A|^{p}|\nabla H|^2+ p^4K\int_{\Sigma_t}|A|^{p-4}|\nabla A|^2+pK\int_{\Sigma_t}|A|^{p}.\label{est1}
\eeqn
Using equation (\ref{eqn2}), we get 
\beqs
&&2\int_{\Sigma_t}|A|^{p}|\nabla H|^2=\int_{\Sigma_t}|A|^{p}(\Delta-\partial_{t})H^2+2\int_{\Sigma_t}H^2|A|^{p+2}\\
&\leq& 2p\int_{\Sigma_t}|H||A|^{p-1}|\nabla H||\nabla A|-\partial_{t}\int_{\Sigma_t}H^2|A|^p +\int_{\Sigma_t}H^2\partial_t(|A|^p)\\
&&-\int_{\Sigma_t}H^4|A|^p+2\int_{\Sigma_t}H^2|A|^{p+2}.
\eeqs
Since 
$$
2p\int_{\Sigma_t}|H||A|^{p-1}|\nabla H||\nabla A|\leq \int_{\Sigma_t}|A|^p|\nabla H|^2+p^2\int_{\Sigma_t}H^2|A|^{p-2}|\nabla A|^2,
$$
$$
\partial_{t}(|A|^p)=p|A|^{p-2}\big(\nabla^2HA+H|A^3|\big),
$$
$$
p|H|^3|A|^{p+1}\leq H^4|A|^p+\frac{p^2}4H^2|A|^{p+2},
$$
we have
\beqn
\int_{\Sigma_t}|A|^p|\nabla H|^2 
&\leq& p^2\int_{\Sigma_t}H^2|A|^{p-2}|\nabla A|^2-\partial_{t}\int_{\Sigma_t}H^2|A|^p+p\int_{\Sigma_t}H^2|A|^{p-2}\nabla^2HA\nonumber\\
&& +p\int_{\Sigma_t}|H|^3|A|^{p+1}-\int_{\Sigma_t}H^4|A|^p+2\int_{\Sigma_t}H^2|A|^{p+2}\nonumber\\
&\leq& -\partial_{t}\int_{\Sigma_t}H^2|A|^p+p^2K^2\int_{\Sigma_t}|A|^{p-4}|\nabla A|^2+p\int_{\Sigma_t}H^2|A|^{p-2}\nabla^2HA\nonumber\\
&& +\frac{p^2}2 K^2\int_{\Sigma_t}|A|^{p}.\label{est2}
\eeqn
For the third term on the right of (\ref{est2}),
\beqn
p\int_{\Sigma_t}H^2|A|^{p-2}\nabla^2HA 
&\leq& p(p-1)\int_{\Sigma_t}H^2|A|^{p-2}|\nabla H||\nabla A|+2p\int_{\Sigma_t}|H||A|^{p-1}|\nabla H|^2\nonumber\\
&\leq&\frac{1}{2}\int_{\Sigma_t}|A|^p|\nabla H|^2+\frac{p^4}2\int_{\Sigma_t}H^4|A|^{p-4}|\nabla A|^2\nonumber\\
&&+\frac{1}{4}\int_{\Sigma_t}|A|^{p}|\nabla H|^2+4p^2\int_{\Sigma_t}H^2|A|^{p-2}|\nabla H|^2\nonumber\\
&\leq& \frac34\int_{\Sigma_t}|A|^p|\nabla H|^2
+(\frac{np^4}{2}+4np^2)\int_{\Sigma_t}H^2|A|^{p-2}|\nabla A|^2\nonumber\\
&\leq& \frac34\int_{\Sigma_t}|A|^p|\nabla H|^2
+np^2(\frac{p^2}2+4) K^2\int_{\Sigma_t}|A|^{p-4}|\nabla A|^2
.\label{est3}
\eeqn
Substituting (\ref{est3}) into (\ref{est2}), we get
\beqs
\frac{1}{4}\int_{\Sigma_t}|A|^p|\nabla H|^2 
&\leq& -\partial_t\int_{\Sigma_t}H^2|A|^p+np^4K^2\int_{\Sigma_t}|A|^{p-4}|\nabla A|^2+\frac{p^2}{2}K^2\int_{\Sigma_t}|A|^p.
\eeqs
Back to (\ref{est1}), we obtain
\beqs
\partial_{t}\int_{\Sigma_t}|A|^p
&\leq& -\frac1K\partial_{t}\int_{\Sigma_t}H^2|A|^p
+(n+1)p^4K\int_{\Sigma_t}|A|^{p-4}|\nabla A|^2
+(\frac{p^2}2+p)K\int_{\Sigma_t}|A|^p.
\eeqs
By equation (\ref{eqn3}) from Lemma \ref{lem:equalities for calculation}, for $p\geq4$,
\beqs
2\int_{\Sigma_t}|A|^{p-4}|\nabla A|^2
&=&\int_{\Sigma_t}|A|^{p-4}(\Delta-\partial_t)|A|^2+2\int_{\Sigma_t}|A|^{p}\\
&=& -2(p-4)\int_{\Sigma_t}|A|^{p-4}|\nabla A|^2-\frac{2}{p-2}\int_{\Sigma_t}\partial_{t}\big(|A|^{p-2}\big)+2\int_{\Sigma_t}|A|^{p},
\eeqs
i.e.,
\beqs
\int_{\Sigma_t}|A|^{p-4}|\nabla A|^2=
-\frac{1}{(p-2)(p-3)}\partial_{t}\int_{\Sigma_t}|A|^{p-2}
+\frac{1}{p-3}\int_{\Sigma_t}|A|^{p}.
\eeqs
Hence we have
\beqn
&&\partial_{t}\int_{\Sigma_t}|A|^p\nonumber\\
&\leq& -\frac1K\partial_{t}\int_{\Sigma_t}H^2|A|^p 
-\frac{(n+1)p^4}{(p-2)(p-3)}K\partial_{t}\int_{\Sigma_t}|A|^{p-2}
+\Big(\frac{(n+1)p^4}{p-3}+\frac{p^2}2+p\Big)K\int_{\Sigma_t}|A|^p\nonumber\\
&\leq& -\frac1{K}\partial_{t}\int_{\Sigma_t}H^2|A|^p-cK\partial_{t}\int_{\Sigma_t}|A|^{p-2}+cK\int_{\Sigma_t}|A|^p
\label{est4}.
\eeqn
If we set
\beqs
U(t)
=\int_{\Sigma_t}|A|^p+\frac1K\int_{\Sigma_t}H^2|A|^p+cK\int_{\Sigma_t}|A|^{p-2},
\eeqs
then actually (\ref{est4}) becomes
\beqs
U'
&\leq& -\frac{K'}{K^2}\int_{\Sigma_t}H^2|A|^p+cK'\int_{\Sigma_t}|A|^{p-2}+cK\int_{\Sigma_t}|A|^p\\
&\leq& (|K'|/K+cK)U,
\eeqs
which implies
\beqs
 U(t)\leq e^{\int^{t}_{t_0}(|K'|/K+cK)}U(t_0).
\eeqs
In particular,
\beqs
\int_{\Sigma_t}|A|^p
\leq \Big(\int_{\Sigma_{t_0}}|A|^p(t_0)+cK(t_0)\int_{\Sigma_{t_0}}|A|^{p-2}(t_0)\Big)e^{\int^{t}_{t_0}(|K'|/K+cK)}.
\eeqs
\end{proof}

\begin{cor}\label{cor:global invariant Lp est}
    Let $\x:\Sigma^n\times[t_0,t_1]\rightarrow \mathbb{R}^{n+1}$ be a closed smooth mean curvature flow satisfying the uniform bound 
    $$
        \sup_{s\in[t_0,t]}\sup_{\Sigma_s}|HA|(\cdot,s)\leq K(t),\quad \forall\,t\in[t_0,t_1].
    $$
    Then for any $p\geq 4$ there exist positive constants $c=c(n,p)$ such that for any $t\in[t_0,t_1]$
    \beqs
    \int_{\Sigma_t}|A|^p
    &\leq& \Big(\int_{\Sigma_{t_0}}|A|^p(t_0)+cK(t)\int_{\Sigma_{t_0}}|A|^{p-2}(t_0)\Big)e^{c(t-t_0)K(t)}\\
    &\leq& \mbox{Vol}_{g(t_0)}(\Sigma_{t_0})\Big((\sup_{\Sigma_{t_0}}|A|)^p+cK(t)(\sup_{\Sigma_{t_0}}|A|)^{p-2}\Big)e^{c(t-t_0)K(t)}.
    \eeqs
\end{cor}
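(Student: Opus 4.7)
The corollary should follow almost immediately from Theorem \ref{theo:global Lp est} by the trick of freezing the bound at its terminal value. The hypothesis here is stronger in that $K(t)$ is, by construction, nondecreasing in $t$ (it is a running supremum), and my plan is to trade the time-dependent bound $K(\cdot)$ appearing in Theorem \ref{theo:global Lp est} for the constant bound $K(t)$ on each subinterval $[t_0,t]$.

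Fix an arbitrary $t\in[t_0,t_1]$. My plan is to apply Theorem \ref{theo:global Lp est} on the restricted flow $\x:\Sigma^n\times[t_0,t]\to\mathbb{R}^{n+1}$ with the \emph{constant} bound function $\tilde K(s)\equiv K(t)$ for $s\in[t_0,t]$. The hypothesis in the present corollary ensures $\sup_{\Sigma_s}|HA|(\cdot,s)\le K(s)\le K(t)=\tilde K(s)$ for every $s\in[t_0,t]$, so the assumption (\ref{AH bound}) of Theorem \ref{theo:global Lp est} is satisfied with this choice of $\tilde K$. Since $\tilde K$ is constant in $s$, the term $|\tilde K'|/\tilde K$ vanishes identically and the exponential factor collapses to $\exp(c(t-t_0)K(t))$. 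Substituting into the conclusion of Theorem \ref{theo:global Lp est} yields the first displayed inequality.

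The second inequality is then just the obvious bound
\[
\int_{\Sigma_{t_0}}|A|^p(t_0)\le\vol_{g(t_0)}(\Sigma_{t_0})\Big(\sup_{\Sigma_{t_0}}|A|\Big)^p,
\]
and the analogous estimate with $p$ replaced by $p-2$, applied to the two integral quantities on the right of the first inequality.

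There is essentially no obstacle here: the only point to keep in mind is that the hypothesis of Theorem \ref{theo:global Lp est} is phrased with a $t$-dependent bound, and one has to justify that replacing it by the constant terminal value $K(t)$ is legitimate on the interval $[t_0,t]$. This is immediate from the fact that $K(\cdot)$ in the corollary is a running supremum, hence monotone nondecreasing.
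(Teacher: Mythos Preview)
Your proof is correct. Both your argument and the paper's reduce the corollary to an application of Theorem~\ref{theo:global Lp est} with a \emph{constant} bound function on the interval $[t_0,t]$, so that the $|K'|/K$ term vanishes; the core idea is identical. The only difference is cosmetic: the paper first rescales the flow by $\sqrt{K(t)}$ so that the constant bound becomes $\hat K\equiv 1$, applies Theorem~\ref{theo:global Lp est} to the rescaled flow, and then converts back using the scaling relations for $\int|A|^p$. You instead apply Theorem~\ref{theo:global Lp est} directly with the constant function $\tilde K(s)\equiv K(t)$, which is cleaner and avoids the rescaling bookkeeping entirely. Both routes produce exactly the same inequality, and your version is the more elementary of the two.
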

\begin{proof}
    Fix $\tilde{t}\in[t_0,t_1]$ and $\tilde{K}:=K(\tilde{t})$. Consider instead the rescaled flow $\tilde{\x}:\Sigma\times[\tilde{K}t_0,\tilde{K}\tilde{t}]\to\mathbb{R}^{n+1}$ with
    $\tilde{\x}(\cdot,t)=\sqrt{\tilde{K}}\x(\cdot,t/{\tilde{K}})$.
    One sees 
    $$\sup\limits_{\tilde{\Sigma}_t}|\tilde{H}\tilde{A}|\leq 1,\quad\forall\,t\in[\tilde{K}t_0,\tilde{K}\tilde{t}].$$
    Provided the uniform bound $\hat{K}\equiv1,$ 
    applying Theorem \ref{theo:global Lp est} yields
    \beqs
    &&\tilde{K}^{\frac{n-p}{2}}\int_{\Sigma_{\tilde{t}}}|A|^{p}
    =\int_{\tilde{\Sigma}_{\tilde{K}\tilde{t}}}|\tilde{A}|^p\\
    &\leq& \Big(\int_{\tilde{\Sigma}_{\tilde{K}t_0}}|\tilde{A}|^{p}(\tilde{K}t_0)+c\int_{\tilde{\Sigma}_{\tilde{K}t_0}}|\tilde{A}|^{p-2}(\tilde{K}t_0)\Big)e^{c\tilde{K}(\tilde{t}-t_0)}\\
    &=& \Big({\tilde{K}}^{\frac{n-p}{2}}\int_{\Sigma_{t_0}}|A|^p(t_0)+c{\tilde{K}}^{\frac{n+2-p}{2}}\int_{\Sigma_{t_0}}|A|^{p-2}(t_0)\Big)e^{c\tilde{K}(\tilde{t}-t_0)},
    \eeqs
    which implies
    \beqs
    \int_{\Sigma_{\tilde{t}}}|A|^p
    &\leq& \Big(\int_{\Sigma_{t_0}}|A|^p(t_0)+c\tilde{K}\int_{\Sigma_{t_0}}|A|^{p-2}(t_0)\Big)e^{c\tilde{K}(\tilde{t}-t_0)}.
    \eeqs
    Since the choice of $\tilde{t}$ is arbitrary, we actually obtain the result.
\end{proof}

By the use of some cutoff function, Theorem \ref{theo:global Lp est} can be generalized to a local version. 
\begin{theo}\label{theo:local Lp est}
    Fix $x_0\in\mathbb{R}^{n+1}$ and $r>0$. Let $\x:\Sigma^n\times[t_0,t_1]\rightarrow \mathbb{R}^{n+1}$ be a complete noncompact smooth mean curvature flow satisfying the uniform bound 
    $$
        \sup_{B(x_0,r)\cap \Sigma_{t}}|HA|(\cdot,s)\leq K(t),\quad \forall\, t\in[t_0,t_1].
    $$
    Then for any $p\geq 4$ there exist positive constants $c=c(n,p)$ such that for any $t\in[t_0,t_1]$
    \beqs
    \int_{B(x_0,r/2)\cap \Sigma_t}|A|^p
    &\leq& 
    \Big(\int_{B(x_0,r)\cap \Sigma_{t_0}}|A|^p(t_0)
    +cK(t_0)\int_{B(x_0,r)\cap \Sigma_{t_0}}|A|^{p-2}(t_0)+r^{-p}\mbox{Vol}_{g(t_0)}(B_r)\Big)\\
    && \cdot e^{\int_{t_0}^t(|K'|/K+cK)},
    \eeqs
    where $B_r=B(x_0,r+n^{1/4}\int_{t_0}^{t}\sqrt{K})\cap \Sigma_{t_0}$.
\end{theo}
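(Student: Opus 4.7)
The plan is to repeat the proof of Theorem \ref{theo:global Lp est} with $|A|^p$ replaced everywhere by $\phi^{2p}|A|^p$, where $\phi$ is a spatial cutoff pulled back to the flow, and then to absorb all the extra terms produced by derivatives hitting $\phi$ via Young's inequality, yielding a harmless additional $r^{-p}\mbox{Vol}$ contribution frozen at time $t_0$.

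First I would fix a smooth $\psi:\RR^{n+1}\to[0,1]$ equal to $1$ on $B(x_0,r/2)$, supported in $B(x_0,r)$, with $|\nabla\psi|\leq c_n/r$, and set $\phi(p,t):=\psi(\mathbf{x}(p,t))$. Then $|\nabla^{\Sigma}\phi|\leq c_n/r$ and, from $\partial_t\mathbf{x}=-H\n$, one has $\partial_t\phi=-H\langle\nabla\psi,\n\rangle$. Since $|H|^2\leq n|A|^2$ combined with $|HA|\leq K$ gives the pointwise bound $|H|\leq n^{1/4}\sqrt{K}$ on $\Supp\phi$, we get $|\partial_t\phi|\leq c_n n^{1/4}\sqrt{K}/r$. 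Moreover $\Supp\phi(\cdot,s)\subset B(x_0,r)\cap\Sigma_s$ for every $s\in[t_0,t]$, and since points of the flow move with normal speed bounded by $n^{1/4}\sqrt K$, their preimages at time $t_0$ lie inside $B_r$; combined with $\partial_t\,d\mu=-H^2d\mu\leq 0$ in Lagrangian coordinates this also yields the volume comparison $\int_{\Sigma_s}\phi^{2p}\,d\mu_s\leq\mbox{Vol}_{g(t_0)}(B_r)$ for every $s\in[t_0,t]$.

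Next I would redo the chain \eqref{est1}--\eqref{est4} with $\phi^{2p}$ carried inside each integral. Every integration by parts now creates an additional term with a factor of $\phi^{2p-1}\nabla\phi$, and every $\partial_t$ on an integral creates an extra $\phi^{2p-1}\partial_t\phi$; pointwise these are bounded by $c_n/r$ or $c_n|H|/r$ times expressions of the form $|A|^j|\nabla A|^k$ or $|A|^j|\nabla H||\nabla A|^k$ with $j+k\leq p$. Successive applications of Young's inequality (absorbing $|H|$ via $|H|\leq n^{1/4}\sqrt K$ and products $H^2|A|^2$ via $|HA|\leq K$) fold these cross-terms back into the two good quadratic quantities $\int\phi^{2p}|\nabla H|^2|A|^p$ and $\int\phi^{2p}|A|^{p-4}|\nabla A|^2$, leaving residues of type $r^{-2}K\int\phi^{2p-2}|A|^{p-2}$. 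A final Young step with conjugate exponents $(\tfrac{p}{p-2},\tfrac{p}{2})$, namely $r^{-2}|A|^{p-2}\leq\tfrac{p-2}{p}|A|^p+\tfrac{2}{p}r^{-p}$, splits each residue into a piece absorbed into $K\int\phi^{2p}|A|^p$ and a piece bounded by $Kr^{-p}\mbox{Vol}_{g(t_0)}(B_r)$ via the volume comparison above. Setting
$$U(t):=\int_{\Sigma_t}\phi^{2p}|A|^p+\tfrac{1}{K}\int_{\Sigma_t}\phi^{2p}H^2|A|^p+cK\int_{\Sigma_t}\phi^{2p-2}|A|^{p-2}+cr^{-p}\mbox{Vol}_{g(t_0)}(B_r),$$
I expect the computation to close as $U'\leq(|K'|/K+cK)U$, whence Gr\"onwall combined with $\phi\equiv 1$ on $B(x_0,r/2)\cap\Sigma_t$ and $\Supp\phi(\cdot,t_0)\subset B(x_0,r)\cap\Sigma_{t_0}\subset B_r$ delivers the stated bound.

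The main obstacle is the numerical bookkeeping for the cross-terms generated when $\partial_t\phi$ interacts with the second-order identities \eqref{eqn2} and \eqref{eqn3}: each such term has to be first linearized by trading $|H|$ for $K/|A|$ through $|HA|\leq K$, and then split by Young against the good quadratic terms with constants small enough that the absorbed portions stay strictly below the coefficients appearing in \eqref{est2} and \eqref{est3}; this requires two sequential Young applications whose constants must be balanced. Once this balancing is carried out and the lowest-order garbage is fed through the $r^{-2}|A|^{p-2}\to|A|^p+r^{-p}$ Young step, the differential inequality for $U$ closes with exactly the form of the closed case, up to the frozen $r^{-p}\mbox{Vol}$ constant that enters the statement.
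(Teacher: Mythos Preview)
Your proposal is correct and follows essentially the same route as the paper. The paper also inserts a high-power cutoff (written as $\phi=\eta^{1/\delta}=\eta^{p}$ for a radial cutoff $\eta$, versus your $\phi^{2p}$ for an ambient cutoff $\phi$), reruns the chain \eqref{est1}--\eqref{est4} with the weight, bounds the extra $\nabla\phi$ and $\partial_t\phi$ terms via $|H||A|\leq K$ and Young, and closes a Gr\"onwall inequality for the same quantity $U$; the only cosmetic difference is that the paper carries the $r^{-p}\mathrm{Vol}$ term as an inhomogeneity on the right-hand side of the ODE rather than folding it into $U$ as you do. One small caution: the crude bound $|\partial_t\phi|\leq c_n n^{1/4}\sqrt{K}/r$ you state in the second paragraph is useful for the support/volume comparison but would not close the top-order term $\int|A|^p\partial_t(\phi^{2p})$ by itself; you need (and in your third and fourth paragraphs you do use) the sharper form $|\partial_t\phi|\leq c_n|H|/r$ together with $|H|\leq K/|A|$, exactly as the paper does.
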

\begin{proof}
Note that $c$ always denotes nonnegative constants depending on $n$ and $p$, which may vary from line to line. Let $\phi(x,t)$ be a smooth function with compact support which will be determined later. Following the steps in the proof of Theorem \ref{theo:global Lp est}, we have
\beqs
\partial_{t}\int_{\Sigma_t}|A|^p\phi
&\leq& \frac{c}{K}\int_{\Sigma_t}|A|^p|\nabla H|^2\phi+cK\int_{\Sigma_t}|A|^{p-4}|\nabla A|^2\phi+cK\int_{\Sigma_t}|A|^{p}\phi\\
&&+c\int_{\Sigma_t}|A|^{p-1}|\nabla H||\nabla\phi|+\int_{\Sigma_t}|A|^p \partial_{t}\phi\\
&\leq& \frac{c}{K}\int_{\Sigma_t}|A|^p|\nabla H|^2\phi+cK\int_{\Sigma_t}|A|^{p-4}|\nabla A|^2\phi+cK\int_{\Sigma_t}|A|^{p}\phi\\
&&+cK\int_{\Sigma_t}|A|^{p-2}\phi^{-1}|\nabla\phi|^2+\int_{\Sigma_t}|A|^p \partial_{t}\phi,
\eeqs

\beqs
\int_{\Sigma_t}|A|^{p}|\nabla H|^2\phi
&\leq& -\partial_{t}\int_{\Sigma_t}H^2|A|^{p}\phi+c\int_{\Sigma_t}H^2|A|^{p-2}|\nabla A|^2\phi+c\int_{\Sigma_t}H^2|A|^{p+2}\phi\\
&&+2\int_{\Sigma_t}|H||A|^p|\nabla H||\nabla \phi|+p\int_{\Sigma_t}H^2|A|^{p-1}|\nabla H||\nabla\phi|+\int_{\Sigma_t}H^2|A|^p\partial_{t}\phi\\
&\leq& -\partial_{t}\int_{\Sigma_t}H^2|A|^{p}\phi+c\int_{\Sigma_t}H^2|A|^{p-2}|\nabla A|^2\phi+c\int_{\Sigma_t}H^2|A|^{p+2}\phi\\
&&+\frac{1}{2}\int_{\Sigma_t}|A|^p|\nabla H|^2\phi+c\int_{\Sigma_t}H^2|A|^{p}\phi^{-1}|\nabla\phi|^2+\int_{\Sigma_t}H^2|A|^p\partial_{t}\phi\\
&\leq& -\partial_{t}\int_{\Sigma_t}H^2|A|^{p}\phi+cK^2\int_{\Sigma_t}|A|^{p-4}|\nabla A|^2\phi+cK^2\int_{\Sigma_t}|A|^{p}\phi\\
&&+\frac{1}{2}\int_{\Sigma_t}|A|^p|\nabla H|^2\phi+cK^2\int_{\Sigma_t}|A|^{p-2}\phi^{-1}|\nabla\phi|^2+\int_{\Sigma_t}H^2|A|^p\partial_{t}\phi,
\eeqs

\beqs
\frac{c}{K}\int_{\Sigma_t}|A|^p|\nabla H|^2\phi
&\leq& -\frac{c}{K}\partial_{t}\int_{\Sigma_t}H^2|A|^p\phi+cK\int_{\Sigma_t}|A|^{p-4}|\nabla A|^2\phi
+cK\int_{\Sigma_t}|A|^p\phi\\
&&+cK\int_{\Sigma_t}|A|^{p-2}\phi^{-1}|\nabla\phi|^2+\frac{c}{K}\int_{\Sigma_t}H^2|A|^p\partial_{t}\phi,
\eeqs

\beqs
2\int_{\Sigma_t}|A|^{p-4}|\nabla A|^2\phi
&\leq& -c\partial_{t}\int_{\Sigma_t}|A|^{p-2}\phi+2\int_{\Sigma_t}|A|^{p}\phi+2\int_{\Sigma_t}|A|^{p-3}|\nabla A||\nabla\phi|\\
&&+c\int_{\Sigma_t}|A|^{p-2}\partial_{t}\phi\\
&\leq& -c\partial_{t}\int_{\Sigma_t}|A|^{p-2}\phi+2\int_{\Sigma_t}|A|^{p}\phi+\int_{\Sigma_t}|A|^{p-4}|\nabla A|^2\phi\\
&&+c\int_{\Sigma_t}|A|^{p-2}\phi^{-1}|\nabla\phi|^2+c\int_{\Sigma_t}|A|^{p-2}\partial_{t}\phi,
\eeqs

\beqs
K\int_{\Sigma_t}|A|^{p-4}|\nabla A|^2\phi
&\leq& -cK\partial_{t}\int_{\Sigma_t}|A|^{p-2}\phi
+2K\int_{\Sigma_t}|A|^{p}\phi
+cK\int_{\Sigma_t}|A|^{p-2}\phi^{-1}|\nabla\phi|^2\\
&&+cK\int_{\Sigma_t}|A|^{p-2}\partial_{t}\phi,
\eeqs

\beqn
\partial_{t}\int_{\Sigma_t}|A|^{p}\phi
&\leq& -\frac{c}{K}\partial_{t}\int_{\Sigma_t}H^2|A|^p\phi
+cK\int_{\Sigma_t}|A|^{p-4}|\nabla A|^2\phi
+cK\int_{\Sigma_t}|A|^p\phi\nonumber\\
&&+cK\int_{\Sigma_t}|A|^{p-2}\phi^{-1}|\nabla\phi|^2
+\frac{c}{K}\int_{\Sigma_t}H^2|A|^p\partial_{t}\phi+\int_{\Sigma_t}|A|^p\partial_{t}\phi\nonumber\\
&\leq& -\frac{c}K\partial_{t}\int_{\Sigma_t}H^2|A|^{p}\phi
-cK\partial_{t}\int_{\Sigma_t}|A|^{p-2}\phi
+cK\int_{\Sigma_t}|A|^p\phi\nonumber\\
&&+cK\int_{\Sigma_t}|A|^{p-2}\phi^{-1}|\nabla\phi|^2
+\int_{\Sigma_t}|A|^p|\partial_{t}\phi| 
+cK\int_{\Sigma_t}|A|^{p-2}|\partial_{t}\phi|.\label{est5}
\eeqn

Consider a smooth decreasing function $\eta$, which equals $1$ on $[0,r/2]$ and vanishes on $[r,\infty)$, satisfying 
$|\eta'|\leq 3/r$. For any $0<\delta<1$ we set $\psi:=\eta^{1/\delta}$ such that
$$|\psi'|\leq \frac{3}{\delta r}\psi^{1-\delta}.$$
Now we choose $\phi:=\psi(|x-x_0|)$. Then 
\beqs
\phi^{-1}|\nabla\phi|^2\leq n\psi^{-1}(\psi')^2\leq \frac{9n}{\delta^2r^2}\phi^{1-2\delta},
\eeqs
\beqs
|\partial_{t}\phi|=|\psi'||\partial_{t}|x-x_0||\leq |\psi'||H|\leq \frac{3}{\delta r}|H|\phi^{1-\delta}.
\eeqs
Take $\delta=\frac1p$. By Young's inequality,
\beqs
\int_{\Sigma_t}|A|^{p-2}\phi^{-1}|\nabla\phi|^2
&\leq& \frac{c}{\delta^2r^2}\int_{\Sigma_t}|A|^{p-2}\phi^{1-2\delta}
\leq c\int_{\Sigma_t}|A|^p\phi+\frac{c}{\delta^pr^p}\int_{\Sigma_t}\phi^{1-p\delta}\\
&\leq& c\int_{\Sigma_t}|A|^p\phi+\frac{c}{r^p}\mbox{Vol}_{g(t)}\big(B(x_0,r)\cap \Sigma_{t}\big),
\eeqs

\beqs
\int_{\Sigma_t}|A|^{p-2}|\partial_{t}\phi|
&\leq& \frac{c}{\delta r}\int_{\Sigma_t}|A|^{p-1}\phi^{1-\delta}
\leq c\int_{\Sigma_t}|A|^p\phi+\frac{c}{\delta^pr^p}\int_{\Sigma_t}\phi^{1-p\delta}\\
&\leq& c\int_{\Sigma_t}|A|^p\phi+\frac{c}{r^p}\mbox{Vol}_{g(t)}\big(B(x_0,r)\cap \Sigma_{t}\big),
\eeqs

\beqs
\int_{\Sigma_t}|A|^{p}|\partial_{t}\phi|
&\leq& \frac{c}{\delta r}\int_{\Sigma_t}|H||A|^{p}\phi^{1-\delta}
\leq \frac{c}{\delta r}K\int_{\Sigma_t}|A|^{p-1}\phi^{1-\delta}\\
&\leq& cK\int_{\Sigma_t}|A|^p\phi+\frac{c}{r^p}K\mbox{Vol}_{g(t)}\big(B(x_0,r)\cap \Sigma_{t}\big).
\eeqs
Back to (\ref{est5}), we obtain
\beqn
&&\partial_{t}\int_{\Sigma_t}|A|^p\phi+\frac{c}K\partial_{t}\int_{\Sigma_t}H^2|A|^p\phi+cK\partial_{t}\int_{\Sigma_t}|A|^{p-2}\phi\nonumber\\
&\leq& cK\int_{\Sigma_t}|A|^p\phi 
+ \frac{cK}{r^p}\mbox{Vol}_{g(t)}\big(B(x_0,r)\cap \Sigma_{t}\big).\label{est6}
\eeqn
If we set
$$U(t)=\int_{\Sigma_t}|A|^p\phi+\frac{c}K\int_{\Sigma_t}H^2|A|^p\phi+cK\int_{\Sigma_t}|A|^{p-2}\phi,$$
then actually it becomes
\beqs
U'
&\leq& -\frac{cK'}{K^2}\int_{\Sigma_t}H^2|A|^p\phi
+cK'\int_{\Sigma_t}|A|^{p-2}\phi
+cK\int_{\Sigma_t}|A|^p\phi
+\frac{cK}{r^p}\mbox{Vol}_{g(t)}\big(B(x_0,r)\cap \Sigma_{t}\big)\\
&\leq& (|K'|/K+cK)U
+cr^{-p}K\mbox{Vol}_{g(t)}\big(B(x_0,r)\cap \Sigma_{t}\big).
\eeqs
Since 
$$\partial_{t}|x-x_0|\leq|H|\leq n^{1/4}\sqrt{K}$$
and
$$\partial_{t}d\mu=-H^2d\mu,$$ 
we know
$$B(x_0,r)\cap \Sigma_t\subset B(x_0,r+n^{1/4}\int_{t_0}^{t}\sqrt{K})\cap \Sigma_{t_0}:=B_r,$$
$$\mbox{Vol}_{g(s)}\big(B(x_0,r)\cap \Sigma_{s}\big)\leq \mbox{Vol}_{g(t_0)}(B_r),\quad\forall\,s\in[t_0,t].$$
Then for any $s\in[t_0,t]$,
\beqs
\partial_{s}\Big(e^{-\int_{t_0}^s(|K'|/K+cK)}U(s)\Big)
&\leq& cr^{-p}Ke^{-\int_{t_0}^s(|K'|/K|+cK)}\mbox{Vol}_{g(t_0)}(B_r)\\
&\leq& r^{-p}(cK+|K'|/K)e^{-\int_{t_0}^s(|K'|/K|+cK)}\mbox{Vol}_{g(t_0)}(B_r)\\
&=&\partial_{s}\Big(-e^{-\int_{t_0}^s(|K'|/K+cK)}\Big)r^{-p}\mbox{Vol}_{g(t_0)}(B_r),
\eeqs
i.e.,
\beqs
\partial_{s}\Big(e^{-\int_{t_0}^s(|K'|/K+cK)}\big(U(s)+r^{-p}\mbox{Vol}_{g(t_0)}(B_r)\big)\Big)\leq 0,
\eeqs
which implies
\beqn
U(s)\leq e^{\int_{t_0}^s(|K'|/K+cK)}\Big(U(t_0)+r^{-p}\mbox{Vol}_{g(t_0)}(B_r)\Big),\quad\forall\,s\in[t_0,t].\label{est7}
\eeqn
In particular,
\beqs
\int_{B(x_0,r/2)\cap \Sigma_t}|A|^p
&\leq& 
\Big(\int_{B(x_0,r)\cap \Sigma_{t_0}}|A|^p(t_0)
+cK(t_0)\int_{B(x_0,r)\cap \Sigma_{t_0}}|A|^{p-2}(t_0)+r^{-p}\mbox{Vol}_{g(t_0)}(B_r)\Big)\\
&&\cdot e^{\int_{t_0}^t(|K'|/K+cK)}.
\eeqs
\end{proof}

\begin{cor}\label{cor:local invariant Lp est}
Fix $x_0\in\mathbb{R}^{n+1}$ and $r>0$. Let $\x:\Sigma^n\times[t_0,t_1]\rightarrow\mathbb{R}^{n+1}$ be a complete noncompact smooth mean curvature flow satisfying the uniform bound
$$\sup_{s\in[t_0,t]}\sup_{B(x_0,r)\cap \Sigma_s}|HA|(\cdot,s)\leq K(t),\quad\forall\,t\in[t_0,t_1].$$
Then for any $p\geq4$ there exist positive constants $c=c(n,p)$ such that for any $t\in[t_0,t_1]$
\beqs
\int_{B(x_0,r/2)\cap \Sigma_t}|A|^p
&\leq& \Big(\int_{B(x_0,r)\cap \Sigma_{t_0}}|A|^p(t_0)
+cK(t)\int_{B(x_0,r)\cap \Sigma_{t_0}}|A|^{p-2}(t_0)
+r^{-p}\mbox{Vol}_{g(t_0)}(\tilde{B}_r)\Big)\\
&&\cdot e^{c(t-t_0)K(t)},
\eeqs
where $\tilde{B}_r=B(x_0,r+n^{1/4}(t-t_0)\sqrt{K(t)})\cap\Sigma_{t_0}$.
\end{cor}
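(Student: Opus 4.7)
The plan is to reduce this scale-invariant local estimate to Theorem \ref{theo:local Lp est} by a parabolic rescaling, mirroring the passage from Theorem \ref{theo:global Lp est} to Corollary \ref{cor:global invariant Lp est}. Fix an arbitrary $\tilde{t}\in[t_0,t_1]$, set $\tilde{K}:=K(\tilde{t})$, and consider the rescaled flow
$$\tilde{\x}:\Sigma\times[\tilde{K}t_0,\tilde{K}\tilde{t}]\to\mathbb{R}^{n+1},\qquad \tilde{\x}(\cdot,s)=\sqrt{\tilde{K}}\bigl(\x(\cdot,s/\tilde{K})-x_0\bigr).$$
Since $|\tilde{H}\tilde{A}|=\tilde{K}^{-1}|HA|$, the hypothesis gives $\sup_{s\in[\tilde{K}t_0,\tilde{K}\tilde{t}]}\sup_{B(0,\sqrt{\tilde{K}}r)\cap\tilde{\Sigma}_{s}}|\tilde{H}\tilde{A}|\leq 1$, so the rescaled flow satisfies the hypothesis of Theorem \ref{theo:local Lp est} with the constant bound $\hat{K}\equiv 1$, for which $|\hat{K}'|/\hat{K}\equiv 0$.

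Applying Theorem \ref{theo:local Lp est} on the ball of radius $\sqrt{\tilde{K}}r$ centered at $0$ over the interval $[\tilde{K}t_0,\tilde{K}\tilde{t}]$ yields
\beqs
\int_{B(0,\sqrt{\tilde{K}}r/2)\cap\tilde{\Sigma}_{\tilde{K}\tilde{t}}}|\tilde{A}|^p
&\leq& \Bigl(\int_{B(0,\sqrt{\tilde{K}}r)\cap\tilde{\Sigma}_{\tilde{K}t_0}}|\tilde{A}|^p
+c\int_{B(0,\sqrt{\tilde{K}}r)\cap\tilde{\Sigma}_{\tilde{K}t_0}}|\tilde{A}|^{p-2}\\
&&+(\sqrt{\tilde{K}}r)^{-p}\mbox{Vol}_{\tilde{g}(\tilde{K}t_0)}(\tilde{\mathbf{B}})\Bigr)e^{c\tilde{K}(\tilde{t}-t_0)},
\eeqs
where $\tilde{\mathbf{B}}=B(0,\sqrt{\tilde{K}}r+n^{1/4}\tilde{K}(\tilde{t}-t_0))\cap\tilde{\Sigma}_{\tilde{K}t_0}$; the summand $\tilde{K}(\tilde{t}-t_0)$ appears because $\int_{\tilde{K}t_0}^{\tilde{K}\tilde{t}}\sqrt{\hat{K}}\,ds=\tilde{K}(\tilde{t}-t_0)$. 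Factoring $\sqrt{\tilde{K}}$ out of the radius identifies $\tilde{\mathbf{B}}$ as the image under the rescaling of $\tilde{B}_r=B\bigl(x_0,r+n^{1/4}(\tilde{t}-t_0)\sqrt{\tilde{K}}\bigr)\cap\Sigma_{t_0}$, exactly the ball appearing in the statement.

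The final step is to undo the rescaling using the invariance identities of Section 2 with $\lambda=\sqrt{\tilde{K}}$: each integral $\int|\tilde{A}|^q$ on a rescaled ball equals $\tilde{K}^{(n-q)/2}$ times the corresponding integral of $|A|^q$ on the original ball, while $\mbox{Vol}_{\tilde{g}}(\tilde{\mathbf{B}})=\tilde{K}^{n/2}\mbox{Vol}_{g(t_0)}(\tilde{B}_r)$. Multiplying the displayed inequality by $\tilde{K}^{(p-n)/2}$ converts every rescaled integral into the corresponding original one; the mismatch of weights between the $|\tilde{A}|^p$ and $|\tilde{A}|^{p-2}$ terms produces exactly the factor $\tilde{K}=K(\tilde{t})$ in front of $\int|A|^{p-2}$, and the volume term reduces to $r^{-p}\mbox{Vol}_{g(t_0)}(\tilde{B}_r)$ with the correct normalization. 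Since $\tilde{t}\in[t_0,t_1]$ was arbitrary, renaming $\tilde{t}\to t$ and $\tilde{K}\to K(t)$ gives the claim. The main obstacle is purely bookkeeping: confirming that the time-rescaling $s\mapsto s/\tilde{K}$ and the radius adjustment $\int\sqrt{\hat{K}}\,ds$ combine so that $\tilde{\mathbf{B}}$ and $\tilde{B}_r$ differ only by the common dilation factor $\sqrt{\tilde{K}}$, after which the cancellation of the $\tilde{K}$-weights against the scaling exponents of the norms is automatic.
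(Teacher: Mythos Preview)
Your proposal is correct and follows essentially the same approach as the paper: fix $\tilde t$, rescale parabolically by $\sqrt{K(\tilde t)}$ so that the local bound becomes $\hat K\equiv 1$, apply Theorem~\ref{theo:local Lp est} on the rescaled ball, and then undo the rescaling using the invariance identities of Section~2 to recover the stated inequality (the paper's proof is terser but identical in substance, including the identification of $\tilde{\mathbf B}$ with the dilate of $\tilde B_r$).
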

\begin{proof}
    As in the proof of Theorem \ref{cor:global invariant Lp est}, by rescaling argument and Theorem \ref{theo:global Lp est} one has 
    \beqs
    &&K^{\frac{n-p}{2}}\int_{B(x_0,\frac{r}{2})\cap \Sigma_t}|A|^p
    = \int_{B(x_0,\frac{\sqrt{K}r}{2})\cap\tilde{\Sigma}_{Kt}}|\tilde{A}|^p\\
    &\leq&  \Big(\int_{B(x_0,\sqrt{K}r)\cap\tilde{\Sigma}_{K t_0}}|\tilde{A}|^p(K t_0)
    +c\int_{B(x_0,\sqrt{K}r)\cap\tilde{\Sigma}_{K t_0}}|\tilde{A}|^{p-2}(K t_0)
    +(\sqrt{K}r)^{-p}\mbox{Vol}_{\tilde{g}(K t_0)}(\tilde{B})\Big)e^{c(t-t_0)K}\\
    &\leq& \Big(K^{\frac{n-p}{2}}\int_{B(x_0,r)\cap \Sigma_{t_0}}|A|^p(t_0)
    +cK^{\frac{n+2-p}{2}}\int_{B(x_0,r)\cap \Sigma_{t_0}}|A|^{p-2}(t_0)
    +K^{\frac{n-p}2}r^{-p}\mbox{Vol}_{g(t_0)}(\tilde{B})\Big)e^{c(t-t_0)K},
    \eeqs
    where 
    \beqs
    \tilde{B}
    &=& B(x_0,\sqrt{K}r+n^{1/4}(t-t_0)K)\cap\tilde{\Sigma}_{K t_0}\\
    &=& B(x_0,r+n^{1/4}(t-t_0)\sqrt{K})\cap \Sigma_{t_0}.
    \eeqs
    Thus we obtain the result.
\end{proof}

\section{$L^\infty$ estimate and extension theorem}
From $L^p$ estimate to $L^\infty$ estimate we require the process of Moser iteration using the Michael-Simon inequality, i.e, Lemma \ref{lem:MS Sobolev}, which relies on mean curvature. We conclude the following result from Lemma 5.2 in \cite{[LS2]}.
\begin{lem}[\textbf{Moser iteration}]\label{lem:Moser iteration}(Lemma 5.2 of \cite{[LS2]})
    Let $\x:\Sigma^n\times[t_0,t_1]\rightarrow \mathbb{R}^{n+1}$ be a smooth mean curvature flow. 
    Consider the differential inequality 
    $$(\partial_{t}-\Delta)v\leq fv,\quad v\geq0.$$
    Fix $x_0\in\mathbb{R}^{n+1}$ and $r>0$. For any $q>n+2$ and $\beta\geq2$ there exists a constant $C=C(n,r,t_1-t_0,q,\beta)$ such that for any $t\in[t_0,t_1]$
    \beqs    
    \|v\|_{L^{\infty}(D_{t,r}')}\leq C\|f\|_{L^{q/2}(D_{t,r})}^{\frac{qn^2}{\beta(q-n-2)}}\big(1+\|H\|_{L^{n+2}(D_{t,r})}^{n+2}\big)^{\frac{qn^3}{\beta(n+2)(q-n-2)}}\|v\|_{L^{\beta}(D_{t,r})},
    \eeqs
    where 
    $$D_{t,r}:=\bigcup_{t_0\leq s\leq t}\big(B(x_0,r)\cap \Sigma_s\big),$$
    $$D_{t,r}':=\bigcup_{(t_0+t)/2\leq s\leq t}\big(B(x_0,r/2)\cap \Sigma_s\big).$$
\end{lem}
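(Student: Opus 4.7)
The plan is to run a parabolic Moser iteration, following the Le-Sesum template and replacing the Euclidean Sobolev inequality by the Michael-Simon inequality (Lemma \ref{lem:MS Sobolev}). The hypothesis $q>n+2$ will emerge as precisely the condition that, after the parabolic Sobolev embedding, Hölder's inequality on the nonlinear term $fv$ returns a strictly subcritical power of $v$ that can be absorbed into the left-hand side.

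\medskip

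\textbf{Energy estimate and parabolic Sobolev embedding.} For each exponent $p\geq\beta$, I would test the inequality $(\partial_t-\Delta)v\leq fv$ against $v^{p-1}\phi^2$, where $\phi$ is a smooth space-time cutoff supported in $D_{t,r}$ and equal to $1$ on a slightly smaller cylinder. Using $\partial_t d\mu=-H^2d\mu$ and Young's inequality to absorb the cross term $\int v^{p-1}\phi\nabla v\cdot\nabla\phi$ yields
\beqs
\sup_{s\in[t_0,t]}\int_{\Sigma_s}v^p\phi^2+\int_{t_0}^{t}\!\!\int_{\Sigma_s}|\nabla(v^{p/2}\phi)|^2\leq Cp^2\!\!\int_{t_0}^{t}\!\!\int_{\Sigma_s}v^p\bigl(f\phi^2+|\nabla\phi|^2+|\partial_t\phi|\bigr).
\eeqs
Applying Lemma \ref{lem:MS Sobolev} to $(v^{p/2}\phi)^{2(n-1)/(n-2)}$ together with Cauchy-Schwarz gives the spatial Sobolev estimate
\beqs
\Big(\int_{\Sigma_s}(v^{p/2}\phi)^{2n/(n-2)}\Big)^{(n-2)/n}\leq C\int_{\Sigma_s}\bigl(|\nabla(v^{p/2}\phi)|^2+H^2(v^{p/2}\phi)^2\bigr),
\eeqs
with a subcritical substitute when $n=2$; interpolating this $L^{2}_tL^{2n/(n-2)}_x$ bound against the $L^{\infty}_tL^{2}_x$ bound of the previous display produces a parabolic Sobolev embedding at exponent $2\kappa:=2(n+2)/n$.

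\medskip

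\textbf{Hölder absorption and iteration.} By Hölder in space-time, $\iint fv^p\phi^2\leq\|f\|_{L^{q/2}(D_{t,r})}\|v^{p/2}\phi\|_{L^{2q/(q-2)}}^{2}$, and the assumption $q>n+2$ is equivalent to $2q/(q-2)<2\kappa$. Interpolating the middle norm between $L^{2}$ and $L^{2\kappa}$ and then applying Young's inequality absorbs the $L^{2\kappa}$ factor into the left-hand side and produces a contribution of the form $C\|f\|_{L^{q/2}}^{q/(q-n-2)}\|v^{p/2}\phi\|_{L^2}^{2}$. The remainder $H^{2}(v^{p/2}\phi)^{2}$ coming from Michael-Simon is treated analogously with Hölder exponent $(n+2)/2$ in space-time, producing the factor $\|H\|^{n+2}_{L^{n+2}(D_{t,r})}$ in the bound. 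Choosing a nested family of cutoffs whose supports shrink geometrically from $D_{t,r}$ to $D'_{t,r}$ and iterating with $p_k=\beta\kappa^{k}$ yields a recursion of the form $\Phi_{k+1}\leq A^{1/p_k}\Phi_k$ with $\Phi_k=\|v\|_{L^{p_k}(D_k)}$; passing to $k\to\infty$ and summing the geometric series $\sum_k p_k^{-1}$ gives the $L^\infty$ bound.

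\medskip

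\textbf{Main obstacle.} The delicate point is the bookkeeping of exponents so that the final powers on $\|f\|_{L^{q/2}}$ and $\|H\|_{L^{n+2}}$ come out exactly as $qn^{2}/[\beta(q-n-2)]$ and $qn^{3}/[\beta(n+2)(q-n-2)]$: the factor $q/(q-n-2)$ reflects the Young exponent in the absorption step, while the sum $\sum_k p_k^{-1}$ scales like $1/\beta$ and accounts for the $\beta$ in the denominator. A secondary difficulty is that the $H$-dependence enters through the extraneous $H|f|$ term in the Michael-Simon inequality; coupling it to the parabolic Sobolev estimate at the critical exponent $2\kappa$ avoids any smallness assumption on $\|H\|_{L^n}$ and instead lets $\|H\|_{L^{n+2}}$ appear cleanly as a multiplicative factor that propagates through the iteration.
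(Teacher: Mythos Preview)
The paper does not supply its own proof of this lemma: it is quoted verbatim from Lemma~5.2 of Le--Sesum \cite{[LS2]}, and the surrounding text simply says ``We conclude the following result from Lemma~5.2 in \cite{[LS2]}.'' Your sketch is precisely the standard parabolic Moser iteration that Le--Sesum carry out there---energy inequality for $v^{p}\phi^{2}$, Michael--Simon in place of the Euclidean Sobolev inequality, parabolic interpolation at exponent $2\kappa=2(n+2)/n$, H\"older absorption of the $f$-term using $q>n+2$, and geometric iteration on nested cylinders---so there is nothing to compare: your outline \emph{is} the argument the paper is invoking by citation.
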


Combining Theorem \ref{theo:local Lp est} and Lemma \ref{lem:Moser iteration} we obtain the following local estimate.

\begin{theo}[\textbf{$L^\infty$ estimate}]\label{theo:local est}
    Fix $x_0\in\mathbb{R}^{n+1}$ and $r>0$. Let $\x:\Sigma^n\times[t_0,t_1]\rightarrow \mathbb{R}^{n+1}$ be a complete noncompact smooth mean curvature flow satisfying the uniform bound 
    $$\sup_{B(x_0,r)\cap \Sigma_t}|HA|(\cdot,t)\leq K(t),\quad \forall\, t\in[t_0,t_1].$$
    Then for any $q>n+2$ there exist positive constants $C=C(n,r,t_1-t_0,q,K(t_0))$ and $c=c(n,q)$ such that for any $t\in[t_0,t_1]$
    \beqs
    \sup_{D_{t,r}'}|A|
    \leq C\Big(1+\|A\|_{L^{q}(B_{2r})}^{q}\Big)^c\Big(1+\mbox{Vol}_{g(t_0)}(B_{2r})\Big)^c
    \Big(\int_{t_0}^{t}e^{\int_{t_0}^{s}(|K'|/K+cK)}ds\Big)^{\frac1{q}+\frac{n^2}{q-n-2}},
    \eeqs
    where $B_{2r}=B(x_0,2r+n^{1/4}\int_{t_0}^{t}\sqrt{K})\cap \Sigma_{t_0}.$
    In particular, there exist positive constants $C_0=C_0(n,r,t_1-t_0,\Sigma_0,\int_{t_0}^{t_1}\sqrt{K})$ and $c_n$ such that for any $t\in[t_0,t_1]$
    $$\sup_{B(x_0,r/2)\cap \Sigma_t}|A|
    \leq C_0 \Big(\int_{t_0}^{t}e^{\int_{t_0}^{s}(|K'|/K+c_nK)}ds\Big)^{c_n}
    \leq C_0e^{c_{n}\int_{t_0}^{t}(|K'|/K+K)}.$$
\end{theo}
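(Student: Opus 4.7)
The plan is to apply the Moser iteration of Lemma \ref{lem:Moser iteration} to $v=|A|^2$ and to supply the required space-time $L^\beta$ integrals by time-integrating the local $L^p$ estimate of Theorem \ref{theo:local Lp est}. Rearranging equation (\ref{eqn3}) gives $(\partial_t-\Delta)|A|^2=2|A|^4-2|\nabla A|^2\leq 2|A|^2\cdot|A|^2$, so $v=|A|^2\geq 0$ satisfies the hypothesis of Lemma \ref{lem:Moser iteration} with $f=2|A|^2$. Applying the lemma on the ball of radius $r$ with $\beta=q/2$ (permitted since $q>n+2\geq 4$), and using $\|f\|_{L^{q/2}(D_{t,r})}=2\|A\|_{L^q(D_{t,r})}^2$ together with $\|v\|_{L^{q/2}(D_{t,r})}=\|A\|_{L^q(D_{t,r})}^2$, one obtains
$$\sup_{D'_{t,r}}|A|^2 \leq C\,\|A\|_{L^q(D_{t,r})}^{\frac{4n^2}{q-n-2}+2}\bigl(1+\|H\|_{L^{n+2}(D_{t,r})}^{n+2}\bigr)^{\frac{2n^3}{(n+2)(q-n-2)}}.$$

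The mean curvature factor is disposed of by the pointwise bound $|H|^2\leq\sqrt{n}\,|HA|\leq\sqrt{n}\,K$, which gives $\|H\|_{L^{n+2}(D_{t,r})}^{n+2}\leq c_n(\sup_{[t_0,t]}K)^{(n+2)/2}\,\mathrm{Vol}(D_{t,r})$, and the space-time volume is in turn controlled by $(t-t_0)\mathrm{Vol}_{g(t_0)}(B_{2r})$ since $\partial_td\mu\leq 0$ and $B(x_0,r)\cap\Sigma_s\subset B_{2r}$ for every $s\in[t_0,t]$. To control $\|A\|_{L^q(D_{t,r})}$, I would apply Theorem \ref{theo:local Lp est} with radius $2r$, which is the source of the $B_{2r}$ appearing in the conclusion; this yields at each time $s$ the spatial bound
$$\int_{B(x_0,r)\cap\Sigma_s}|A|^q \leq \Bigl(\|A\|_{L^q(B_{2r})}^q+cK(t_0)\|A\|_{L^{q-2}(B_{2r})}^{q-2}+r^{-q}\mathrm{Vol}_{g(t_0)}(B_{2r})\Bigr)\,e^{\int_{t_0}^s(|K'|/K+cK)}.$$
The $L^{q-2}$ term is absorbed into the $L^q$ term plus a volume factor by H\"older, and integration in $s\in[t_0,t]$ factors the initial-data contributions out of the time integration, leaving the residual factor $\int_{t_0}^t e^{\int_{t_0}^s(|K'|/K+cK)}\,ds$.

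Substituting these space-time bounds into the Moser output and taking a square root gives the asserted inequality. The principal obstacle is the exponent bookkeeping: one must check that, after raising the spatial $L^p$ estimates to the powers dictated by Lemma \ref{lem:Moser iteration}, the combined exponents of the initial-data factors consolidate into a single constant $c=c(n,q)$ multiplying both $\bigl(1+\|A\|_{L^q(B_{2r})}^q\bigr)$ and $\bigl(1+\mathrm{Vol}_{g(t_0)}(B_{2r})\bigr)$, and that the exponent of the time-integral factor is bounded by $\tfrac{1}{q}+\tfrac{n^2}{q-n-2}$ (a mild relaxation of the tight value coming from the choice $\beta=q/2$ and $v=|A|^2$). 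The \emph{In particular} statement then follows by absorbing $K(t_0)$, $\|A\|_{L^q(B_{2r})}$, and $\mathrm{Vol}_{g(t_0)}(B_{2r})$ into a single constant $C_0=C_0(n,r,t_1-t_0,\Sigma_0,\int_{t_0}^{t_1}\sqrt{K})$ and by the crude bound $\int_{t_0}^t e^{\int_{t_0}^s(|K'|/K+c_nK)}\,ds\leq(t_1-t_0)\exp\bigl(\int_{t_0}^t(|K'|/K+c_nK)\bigr)$.
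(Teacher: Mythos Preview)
Your strategy is essentially the paper's: apply Lemma~\ref{lem:Moser iteration} to $v=|A|^2$ with $f=2|A|^2$, then feed the Moser output with the time-integrated local $L^p$ estimate of Theorem~\ref{theo:local Lp est} applied at radius $2r$. Two points of difference are worth noting.

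First, you take $\beta=q/2$ whereas the paper takes $\beta=(n+2)/2$. Both are legitimate; yours produces a slightly smaller exponent $\tfrac{1}{q}+\tfrac{2n^2}{q(q-n-2)}$ on the time integral coming from the $\|A\|_{L^q}$ factor, comfortably inside the stated $\tfrac{1}{q}+\tfrac{n^2}{q-n-2}$.

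Second, and this is where you leave a loose end: you dispose of $\|H\|_{L^{n+2}(D_{t,r})}$ via $|H|^2\le\sqrt{n}\,K$, which injects a factor $(\sup_{[t_0,t]}K)^{(n+2)/2}$. The constant $C$ in the statement depends only on $K(t_0)$, not on $\sup K$, so this factor must be absorbed elsewhere. One can write $\sup_{[t_0,t]}K\le K(t_0)\exp\!\bigl(\int_{t_0}^{t}|K'|/K\bigr)$, but the resulting exponential does not fold cleanly into the stated time-integral factor $\bigl(\int_{t_0}^{t}e^{\int_{t_0}^{s}(|K'|/K+cK)}ds\bigr)^{\frac{1}{q}+\frac{n^2}{q-n-2}}$ with the precise exponent claimed. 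The paper avoids this entirely by using instead the pointwise bound $|H|\le\sqrt{n}\,|A|$, so that $\|H\|_{L^{n+2}(D_{t,r})}\le c_n\|A\|_{L^{n+2}(D_{t,r})}$, and then H\"older pushes this to $\|A\|_{L^q(D_{t,r})}$ times a power of $\mathrm{Vol}(D_{t,r})$. That way \emph{all} the space-time factors are governed by $\|A\|_{L^q(D_{t,r})}^q$ and $\mathrm{Vol}_{g(t_0)}(B_{2r})$, no $\sup K$ appears, and the bookkeeping closes exactly at the exponent $\tfrac{1}{q}+\tfrac{n^2}{q-n-2}$. Swapping your treatment of the $H$ term for this one fixes the gap without changing anything else in your argument.
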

\begin{proof}
Take $\beta=\frac{n+2}{2}$. Applying Lemma \ref{lem:Moser iteration} to 
$$(\partial_{t}-\Delta)|A|^2=-2|\nabla A|^2+2|A|^4\leq2|A|^2\cdot |A|^2$$
yields
\beqn
\sup\limits_{D_{t,r}'}|A|
&\leq& C'\|A\|_{L^{q}(D_{t,r})}^{\frac{qn^2}{\beta(q-n-2)}}\Big(1+\|A\|_{L^{n+2}(D_{t,r})}^{n+2}\Big)^{\frac{qn^3}{2\beta(n+2)(q-n-2)}}\|A\|_{L^{2\beta}(D_{t,r})}\nonumber\\
&\leq& C'\|A\|_{L^{q}(D_{t,r})}^{\frac{2qn^2}{(n+2)(q-n-2)}}\Big(1+\|A\|_{L^{n+2}(D_{t,r})}^{1+\frac{qn^3}{(n+2)(q-n-2)}}\Big)\nonumber\\
&\leq& C'\|A\|_{L^{q}(D_{t,r})}^{\frac{2qn^2}{(n+2)(q-n-2)}}\Big(1+\|A\|_{L^{q}(D_{t,r})}^{1+\frac{qn^3}{(n+2)(q-n-2)}}\mbox{Vol}(D_{t,r})^{\frac{q-n-2}{q(n+2)}+\frac{n^3}{(n+2)^2}}\Big)\nonumber\\
&\leq& C'\|A\|_{L^{q}(D_{t,r})}^{\frac{2qn^2}{(n+2)(q-n-2)}}\Big(1+\|A\|_{L^{q}(D_{t,r})}^{1+\frac{qn^3}{(n+2)(q-n-2)}}\mbox{Vol}_{g(t_0)}(B_{2r})^{\frac{q-n-2}{q(n+2)}+\frac{n^3}{(n+2)^2}}\Big)\nonumber\\
&\leq& C'\Big(1+\|A\|_{L^q(D_{t,r})}^q\Big)^{\frac1q+\frac{n^2}{q-n-2}}
\Big(1+\mbox{Vol}_{g(t_0)}(B_{2r})\Big)^{\frac{q-n-2}{q(n+2)}+\frac{n^3}{(n+2)^2}}
,\label{est9}
\eeqn
where 
$$C'=C'(n,r,t_1-t_0,q),$$
$$\mbox{Vol}(D_{t,r}):=\int_{t_0}^{t}\mbox{Vol}_{g(s)}(B(x_0,r)\cap \Sigma_s)ds,$$
$$B_{2r}=B(x_0,2r+n^{1/4}\int_{t_0}^{t}\sqrt{K})\cap \Sigma_{t_0}
\leq B(x_0,2r+n^{1/4}\int_{t_0}^{t_1}\sqrt{K})\cap \Sigma_{t_0}:=B'_{2r}.$$
It is derived from Theorem \ref{theo:local Lp est} that for $q>n+2$
\beqs
\|A\|_{L^{q}(D_{t,r})}^{q}
&\leq& \Big(\|A\|_{L^{q}(B(x_0,2r)\cap\Sigma_{t_0})}^{q}
+cK(t_0)\|A\|_{L^{q-2}(B(x_0,2r)\cap\Sigma_{t_0})}^{q-2}
+(2r)^{-q}\mbox{Vol}_{g(t_0)}(B_{2r})\Big)\\
&&\cdot\int_{t_0}^{t}e^{\int_{t_0}^{s}(|K'|/K+cK)}ds,
\eeqs
where $c=c(n,q)$. 
Note that
\beqs
&&\|A\|_{L^{q}(B(x_0,2r)\cap\Sigma_{t_0})}^{q}
+cK(t_0)\|A\|_{L^{q-2}(B(x_0,2r)\cap\Sigma_{t_0})}^{q-2}
+(2r)^{-q}\mbox{Vol}_{g(t_0)}(B_{2r})\\
&\leq& \|A\|_{L^{q}(B(x_0,2r)\cap\Sigma_{t_0})}^{q}
+cK(t_0)\|A\|_{L^{q}(B(x_0,2r)\cap\Sigma_{t_0})}^{q-2}\mbox{Vol}_{g(t_0)}(B_{2r})^{\frac2q}
+(2r)^{-q}\mbox{Vol}_{g(t_0)}(B_{2r})\\
&\leq& C(n,r,q,K(t_0))\Big(1+\|A\|_{L^{q}(B(x_0,2r)\cap\Sigma_{t_0})}\Big)^{q}\Big(1+\mbox{Vol}_{g(t_0)}(B_{2r})\Big).
\eeqs
The final coefficient is 
\beqs
    &&C\Big(1+\|A\|_{L^{q}(B(x_0,2r)\cap\Sigma_0)}\Big)^{q(\frac1q+\frac{n^2}{q-n-2})}\Big(1+\mbox{Vol}_{g(t_0)}(B_{2r})\Big)^{\frac1q+\frac{n^2}{q-n-2}+\frac{q-n-2}{q(n+2)}+\frac{n^3}{(n+2)^2}}\\
    &\leq& C\Big(1+\|A\|_{L^{q}(B(x_0,2r)\cap\Sigma_{t_0})}^{q}\Big)^{1+\frac{qn^2}{q-n-2}}\Big(1+\mbox{Vol}_{g(t_0)}(B_{2r})\Big)^{\frac{1}{n+2}+\frac{n^2}{q-n-2}+\frac{n^3}{(n+2)^2}},
\eeqs
where $C=C(n,r,t_1-t_0,q,K(t_0))$.
Back to (\ref{est9}), we have the local $L^{\infty}$ estimate
\beqs
    \sup_{D_{t,r}'}|A|
    \leq C\Big(1+\|A\|_{L^{q}(B(x_0,2r)\cap\Sigma_{t_0})}^{q}\Big)^c\Big(1+\mbox{Vol}_{g(t_0)}(B_{2r})\Big)^c
    \Big(\int_{t_0}^{t}e^{\int_{t_0}^{s}(|K'|/K+cK)}ds\Big)^{\frac1{q}+\frac{n^2}{q-n-2}}.
\eeqs
The second result follows from taking some $q=q(n)>n+2$.
\end{proof}

As an application of the local estimate above, one immediately gets the following extension theorem about $HA$. 
\begin{cor}[\textbf{HA-extension}]\label{cor:AH-extension}
    Let $\x:\Sigma^n\times[0,T)\rightarrow \mathbb{R}^{n+1}$ be a complete noncompact smooth mean curvature flow. Each time slice $\Sigma_t$ has bounded $HA$.
     There exists a positive constant $C=C(n,T,K,V,E,q)$ such that if
     \begin{enumerate}
\item[(1)] the $|HA|$ bound satisfies $$\sup_{t\in[0,T)}\sup_{\Sigma_t}|HA|(\cdot,t)\leq K<\infty;$$  
\item[(2)] the initial data satisfies a uniform volume bound 
\beqs
\sup_{x\in\Sigma_0}\mbox{Vol}_{g(0)}(B(x,1+n^{1/4}T\sqrt{K})\cap\Sigma_0)\leq V<\infty;
\eeqs
\item[(3)] the initial data satisfies an integral bound 
\beqs
\sup_{x\in\Sigma_0}\|A\|_{L^{q}(B(x,1)\cap\Sigma_0)}\leq E<\infty
\eeqs
for some $q>n+2$,
     \end{enumerate}
     then
     $$\limsup_{t\to T}\sup\limits_{\Sigma_t}|A|(\cdot,t)\leq C<\infty.$$
     In particular, the flow can be extended past time $T$.
\end{cor}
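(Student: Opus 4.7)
The plan is to apply the local $L^\infty$ estimate of Theorem \ref{theo:local est} at each point of $\Sigma_t$ for $t\in[0,T)$, and then use hypothesis (1), that $K(t)\equiv K$ is a constant in $t$, to collapse every $t$-dependent factor in that estimate to a constant depending only on $n,T,K,q$.

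First, with $K$ constant one has $|K'|/K\equiv 0$, so $\int_0^s(|K'|/K+cK)=cKs\le cKT$, and consequently the iterated integral appearing in Theorem \ref{theo:local est} satisfies $\int_0^t e^{\int_0^s(|K'|/K+cK)}\,ds\le Te^{cKT}$, a finite constant. This disposes of the only genuinely $t$-dependent ingredient of that estimate.

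Next I apply Theorem \ref{theo:local est} with $t_0=0$, $t_1=t<T$, and $r$ a positive number, centered at any $x_0\in\Sigma_0$. Then $B_{2r}\subset B(x_0,2r+n^{1/4}T\sqrt{K})\cap\Sigma_0$, so hypothesis (2), combined with a Euclidean covering of $B_{2r}$ by finitely many balls of the radius $1+n^{1/4}T\sqrt{K}$ appearing in (2) (the cardinality being $N=N(n,T,K,r)$), bounds $\mbox{Vol}_{g(0)}(B_{2r})\le N V$; a similar covering by unit balls centered on $\Sigma_0$ together with hypothesis (3) yields $\|A\|_{L^q(B_{2r})}^q\le N'E^q$. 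Substituting into Theorem \ref{theo:local est} gives $\sup_{B(x_0,r/2)\cap\Sigma_t}|A|\le C(n,T,K,V,E,q,r)$ uniformly in $t\in[0,T)$ and $x_0\in\Sigma_0$.

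Finally, to upgrade these local bounds into a global one, note that $|HA|\le K$ combined with the elementary inequality $H^2\le n|A|^2$ gives $|H|\le n^{1/4}\sqrt{K}$, so the ambient speed $|\partial_s\mathbf{x}|=|H|$ is uniformly bounded on $[0,T)$ and every $y=\mathbf{x}(p,t)\in\Sigma_t$ lies within ambient distance $n^{1/4}T\sqrt{K}$ of $\mathbf{x}(p,0)\in\Sigma_0$. Choosing $r$ in the previous step large enough that $r/2\ge n^{1/4}T\sqrt{K}$, the balls $\{B(x_0,r/2)\cap\Sigma_t\}_{x_0\in\Sigma_0}$ cover $\Sigma_t$, yielding the uniform bound $\sup_{\Sigma_t}|A|\le C$. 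Extendibility past $T$ then follows at once from Huisken's criterion \cite{[H]}. The main obstacle of this plan is the radii-bookkeeping in the last paragraph: one must reconcile the radii that naturally appear in the conclusion of Theorem \ref{theo:local est} with those in hypotheses (2), (3), and verify that the covering constants depend only on the data $n,T,K,V,E,q$; once this is handled, the rest is a direct substitution.
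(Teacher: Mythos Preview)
Your proposal is correct and follows essentially the same approach as the paper: apply Theorem \ref{theo:local est} with $K(t)\equiv K$ constant and read off a uniform-in-$t$ bound. The paper's entire proof is the single sentence ``it suffices to take $r=1$ in Theorem \ref{theo:local est}'', so your version---making explicit the covering arguments needed to match the radii in hypotheses (2) and (3) to the radius of $B_{2r}$, and using the displacement bound $|H|\le n^{1/4}\sqrt{K}$ to pass from the local estimate to a global one---is a careful fleshing-out of what the paper leaves implicit.
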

\begin{proof}
It suffices to take $r=1$ in Theorem \ref{theo:local est}.
\end{proof}

\section{Blowup estimate of $HA$}

In this section we derive a blowup estiamte of $HA$ from Theorem \ref{theo:local est} and Lemma \ref{lem:A blowup}, which also implies a blowup estimate of mean curvature.

\begin{theo}[\textbf{HA-blowup}]\label{theo:HA-blowup}
Let $\x:\Sigma^n\times[0,T)\rightarrow \mathbb{R}^{n+1}$ be a complete noncompact smooth mean curvature flow with a finite maximal time T. Each time slice $\Sigma_t$ has bounded second fundamental form. Then there exists a positive constant $\epsilon=\epsilon(n)$ such that
$$\limsup_{t\to T}\Big(|T-t|\sup_{\Sigma_t}|HA|\Big)\geq\epsilon.$$
\end{theo}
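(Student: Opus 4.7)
The approach is proof by contradiction, marrying the local $L^\infty$ bound of Theorem \ref{theo:local est} with the blowup lower bound $\sup_{\Sigma_t}|A|\geq 1/\sqrt{2(T-t)}$ from Lemma \ref{lem:A blowup}. Suppose, for contradiction, that $\limsup_{t\to T}(T-t)\sup_{\Sigma_t}|HA| < \epsilon$ for some small $\epsilon=\epsilon(n)>0$ to be fixed. Then for some $t^* \in (0, T)$,
\[
\sup_{\Sigma_t}|HA|(\cdot,t) \leq \frac{\epsilon}{T-t}, \qquad t \in [t^*, T),
\]
so the hypothesis of Theorem \ref{theo:local est} holds on every ball with $K(t):=\epsilon/(T-t)$.

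The key computation is that $|K'|/K = 1/(T-t)$, whence for the constant $c=c(n,q)$ from Theorem \ref{theo:local est},
\[
\int_{t^*}^s \Big(\tfrac{|K'|}{K}+cK\Big)\,d\sigma = (1+c\epsilon)\log\tfrac{T-t^*}{T-s},
\]
and therefore, by a direct integration,
\[
\int_{t^*}^t e^{\int_{t^*}^s(|K'|/K+cK)\,d\sigma}\,ds \;\leq\; C_{t^*,T,c,\epsilon}\,(T-t)^{-c\epsilon}, \qquad t\to T^-.
\]
Fix any $q>n+2$ and set $\alpha := \tfrac{1}{q}+\tfrac{n^2}{q-n-2}$. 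For each $t$ close to $T$, the bounded-$A$ hypothesis on $\Sigma_t$ furnishes $x_t \in \Sigma_t$ with $|A|(x_t,t)\geq \tfrac12\sup_{\Sigma_t}|A|\geq \tfrac{1}{2\sqrt{2(T-t)}}$. Applying Theorem \ref{theo:local est} on $[t^*,t]$ with base point $x_0=x_t$ and radius $r=1$ then yields
\[
\frac{1}{2\sqrt{2(T-t)}} \;\leq\; |A|(x_t,t) \;\leq\; C_0\,(T-t)^{-c\epsilon\alpha},
\]
where $C_0$ absorbs the dependence on $n$, $q$, $t^*$, $T$, $\epsilon$, together with $\|A(\cdot,t^*)\|_{L^q}$ and local volume on $\Sigma_{t^*}$ in an appropriate ball about $x_t$.

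Rearranging gives $(T-t)^{c\epsilon\alpha-\frac12}\leq 2\sqrt{2}\,C_0$. If $q=q(n)$ is chosen so that $\kappa(n):=c(n,q)\alpha(n,q)$ is a fixed positive constant, and if $\epsilon<\tfrac{1}{2\kappa(n)}=:\epsilon(n)$, then $c\epsilon\alpha<\tfrac12$, so the left-hand side tends to infinity as $t\to T$, producing the desired contradiction. The main obstacle is the uniform-in-$x_t$ control on $C_0$: for compact $\Sigma$ this is automatic, but in the noncompact setting one needs to upgrade the slicewise bound $|A|(\cdot,t^*)\leq M$ (implicit in the hypothesis on $\Sigma_{t^*}$) into uniform local $L^q$ and volume bounds on $\Sigma_{t^*}$. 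This can be achieved via the Michael--Simon inequality using $|H|\leq nM$, or, alternatively, by applying Theorem \ref{theo:local est} directly to a parabolic rescaling of the flow centered at $(x_t,t)$, where the relevant parabolic neighborhood is of unit size and its geometry is controlled by the rescaling itself.
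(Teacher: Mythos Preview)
Your proposal is correct and follows essentially the same route as the paper: argue by contradiction, set $K(t)=\epsilon/(T-t)$, compute the exponential integral to extract a $(T-t)^{-c\epsilon}$ growth rate from Theorem~\ref{theo:local est}, and play it off against the $(T-t)^{-1/2}$ lower bound from Lemma~\ref{lem:A blowup}. The paper is terser about the uniformity-in-$x_t$ issue you flag: it simply asserts that $\Sigma_{t_0}$ has ``bounded geometry'' (coming from the slicewise bound on $|A|$ in the hypothesis) and invokes the second conclusion of Theorem~\ref{theo:local est}, where $C_0$ depends only on $n$, $r$, $t_1-t_0$, $\Sigma_{t_0}$, and $\int_{t_0}^{t_1}\sqrt{K}$ --- the last of which is finite here since $\sqrt{K}=\sqrt{\epsilon}\,(T-t)^{-1/2}$ is integrable on $[t_0,T)$, so the relevant extrinsic ball $B_{2r}$ has radius uniformly bounded in $t$.
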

\begin{proof}
For otherwise one finds $t_0\in[0,T)$ and $\epsilon>0$ such that 
$$\sup\limits_{\Sigma_t}|HA|< \frac{\epsilon}{T-t},\quad\forall\,t\in[t_0,T).$$
Actually we find a smooth mean curvature flow $\x:\Sigma^n\times [t_0,T)\rightarrow \mathbb{R}^{n+1}$ with a $|HA|$ bound 
$$K(t)=\frac{\epsilon}{T-t}.$$
For $t$ close to $T$,
\beqs
\int_{t_0}^{t}(|K'|/K+c_nK)
=\int_{t_0}^{t}\frac{1+c_n\epsilon}{T-s}ds=(1+c_n\epsilon)\log(\frac{T-t_0}{T-t}),
\eeqs
\beqs
\int_{t_0}^{t}e^{\int_{t_0}^{s}(|K'|/K+c_nK)}ds
=\int_{t_0}^{t}(\frac{T-t_0}{T-s})^{1+c_n\epsilon}ds
=\frac{(T-t_0)^{1+c_n\epsilon}}{c_n\epsilon}\Big((T-t)^{-c_n\epsilon}-(T-t)^{-c_n\epsilon}\Big).
\eeqs
On the other hand, by Lemma \ref{lem:A blowup} we know
$$\sup\limits_{\Sigma_t}|A|\geq \frac{1}{\sqrt{2}}(T-t)^{-\frac{1}{2}}.$$
Note that $\Sigma_{t_0}$ has bounded geometry. Combining the inequalities and applying Theorem \ref{theo:local est} imply that as $t\to T$,
$$(T-t)^{-\frac{1}{2}}\leq C(n,T,\Sigma_{t_0},\epsilon)(T-t)^{-c_n\epsilon}.$$
Letting $c_n\epsilon<\frac12$ causes a contradiction.
\end{proof}

Remark that Theorem \ref{theo:HA-blowup} certainly works for the closed cases. The type-I blowup is optimal since the standard sphere $S^n\hookrightarrow\mathbb{R}^{n+1}$ satisfies $|HA|=\frac{n}{2(T-t)}$.

\begin{cor}[\textbf{H-blowup}]\label{cor:H-blowup}
    Let $\x:\Sigma^n\times[0,T)\rightarrow \mathbb{R}^{n+1}$ be a complete noncompact smooth mean curvature flow with a finite maximal time T. Each time slice $\Sigma_t$ has bounded second fundamental form. Assume that
    $$\limsup_{t\to T}\Big(|T-t|^{\lambda}\sup_{\Sigma_t}|A|\Big)<\infty$$
    for some $\lambda\in[\frac12,1).$ Then we have the blowup estimate of mean curvature
    $$\limsup_{t\to T}\Big(|T-t|^{1-\lambda}\sup_{\Sigma_t}|H|\Big)>0.$$
\end{cor}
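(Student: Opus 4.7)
The plan is to argue by contradiction, reducing the corollary directly to Theorem \ref{theo:HA-blowup}. Suppose that the desired conclusion fails, so that
\[
\limsup_{t\to T}\Big(|T-t|^{1-\lambda}\sup_{\Sigma_t}|H|\Big)=0.
\]
Then for every $\delta>0$ there exists $t_\delta\in[0,T)$ such that $\sup_{\Sigma_t}|H|\le \delta (T-t)^{-(1-\lambda)}$ for all $t\in[t_\delta,T)$. On the other hand, the hypothesis on $|A|$ gives a constant $C_0<\infty$ and a time $t_1<T$ so that $\sup_{\Sigma_t}|A|\le C_0(T-t)^{-\lambda}$ for all $t\in[t_1,T)$.

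The first step is to multiply these two bounds pointwise. For $t\ge \max\{t_\delta,t_1\}$ one obtains
\[
\sup_{\Sigma_t}|HA|\le \Big(\sup_{\Sigma_t}|H|\Big)\Big(\sup_{\Sigma_t}|A|\Big)\le \delta C_0\,(T-t)^{-1},
\]
and consequently
\[
\limsup_{t\to T}\Big(|T-t|\sup_{\Sigma_t}|HA|\Big)\le \delta C_0.
\]

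The second step is to invoke Theorem \ref{theo:HA-blowup}, which provides a dimensional constant $\epsilon=\epsilon(n)>0$ such that the left-hand side above is at least $\epsilon$. Choosing $\delta<\epsilon/C_0$ from the outset then yields the contradiction $\epsilon\le\delta C_0<\epsilon$, completing the proof. There is essentially no technical obstacle here: the entire content is a one-line H\"older-type multiplication of the two hypotheses, together with an appeal to the previously established $HA$-blowup estimate. The only mild point worth noting is that Theorem \ref{theo:HA-blowup} requires each time slice to have bounded second fundamental form, but this is included in the hypotheses of the corollary.
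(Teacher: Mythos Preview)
Your proof is correct and follows essentially the same route as the paper: assume the conclusion fails, multiply the resulting bound on $|H|$ with the hypothesis on $|A|$ to get $\sup_{\Sigma_t}|HA|\le \delta C_0(T-t)^{-1}$, and contradict Theorem~\ref{theo:HA-blowup} by taking $\delta$ small. The only cosmetic difference is notation for the small parameter.
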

\begin{proof}
For otherwise for any $\varepsilon>0$ one finds $t_{\varepsilon}$ such that 
$$\sup_{\Sigma_t}|H|\leq\varepsilon(T-t)^{\lambda-1},\quad\forall\,t\in[t_{\varepsilon},T).$$
By the assumption there exist nonnegative constants $t_1\in[0,T)$ and 
$$C:=\limsup_{t\to T}\Big(|T-t|^{\lambda}\sup_{\Sigma_t}|A|\Big)<\infty$$ such that
$$\sup_{\Sigma_t}|A|\leq C(T-t)^{-\lambda},\quad\forall\,t\in[t_{1},T).$$
Hence we have
$$\sup_{\Sigma_t}|HA|\leq C\varepsilon(T-t)^{-1},\quad\forall\,t\in[\max\{t_{\varepsilon},t_1\},T).$$
Recall the constant $\epsilon=\epsilon(n)$ in Theorem \ref{theo:HA-blowup}. Letting $C\varepsilon<\epsilon$ contradicts Theorem \ref{theo:HA-blowup}.
\end{proof}

Remark that by Theorem 5.1 of \cite{[C]} Cooper proved the blowup of mean curvature under the same assumption in Corollary \ref{cor:H-blowup} and by Theorem 1.2 of \cite{[LS3]} Le-Sesum proved the case of $\lambda=\frac12$. Hence Theorem \ref{theo:HA-blowup} and Corollary \ref{cor:H-blowup} can be seen as generalizations of these results.

\begin{cor}\label{cor:A-blowup}
    Let $\x:\Sigma^n\times[0,T)\rightarrow \mathbb{R}^{n+1}$ be a complete noncompact smooth mean curvature flow with a finite maximal time T. Each time slice $\Sigma_t$ has bounded second fundamental form. Assume that
    $$\limsup_{t\to T}\Big(|T-t|^{\lambda}\sup_{\Sigma_t}|H|\Big)<\infty$$
    for some $\lambda\in[0,\frac12).$ Then we have the blowup estimate
    $$\limsup_{t\to T}\Big(|T-t|^{1-\lambda}\sup_{\Sigma_t}|A|\Big)>0.$$
    In particular, $t=T$ is a type-II singularity.
\end{cor}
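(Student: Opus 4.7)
The plan is to argue by contradiction, mirroring the proof of Corollary~\ref{cor:H-blowup} with the roles of $|H|$ and $|A|$ interchanged. The key observation is that, combined with the hypothesis on $|H|$, a hypothetical failure of the claimed blowup estimate would force $|HA|$ to grow strictly slower than $(T-t)^{-1}$, contradicting Theorem~\ref{theo:HA-blowup}.

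Concretely, I would suppose toward a contradiction that $\limsup_{t\to T}\bigl(|T-t|^{1-\lambda}\sup_{\Sigma_t}|A|\bigr)=0$. Since this nonnegative quantity has limit $0$, for every $\varepsilon>0$ there exists $t_\varepsilon\in[0,T)$ with
$$\sup_{\Sigma_t}|A|\leq\varepsilon(T-t)^{\lambda-1},\quad\forall\,t\in[t_\varepsilon,T).$$
By the hypothesis on $|H|$, there exist $t_1\in[0,T)$ and a finite constant
$$C:=\limsup_{t\to T}\bigl(|T-t|^{\lambda}\sup_{\Sigma_t}|H|\bigr)+1$$
such that $\sup_{\Sigma_t}|H|\leq C(T-t)^{-\lambda}$ for $t\in[t_1,T)$. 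Multiplying the two bounds yields
$$\sup_{\Sigma_t}|HA|\leq C\varepsilon(T-t)^{-1},\quad\forall\,t\in[\max\{t_\varepsilon,t_1\},T).$$
With $\epsilon(n)$ denoting the constant furnished by Theorem~\ref{theo:HA-blowup}, one fixes $C$ first and then chooses $\varepsilon>0$ small enough that $C\varepsilon<\epsilon(n)$, directly contradicting that theorem.

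For the type-II claim, I would recall that a type-I singularity is characterized by $\limsup_{t\to T}(T-t)^{1/2}\sup_{\Sigma_t}|A|<\infty$, so it suffices to show this quantity is infinite. From the estimate just established, pick a sequence $t_k\to T$ with $(T-t_k)^{1-\lambda}\sup_{\Sigma_{t_k}}|A|\geq c>0$. Since $1-\lambda>\tfrac12$,
$$(T-t_k)^{1/2}\sup_{\Sigma_{t_k}}|A|\geq c\,(T-t_k)^{\lambda-1/2}\longrightarrow\infty$$
as $\lambda<\tfrac12$, so the singularity cannot be type-I.

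I do not anticipate any substantive obstacle: the argument is a direct dual of Corollary~\ref{cor:H-blowup}, and all of the real content is packaged in Theorem~\ref{theo:HA-blowup}. The only delicate point is the order of quantifier choice—fixing $C$ from the $|H|$ hypothesis \emph{before} shrinking $\varepsilon$—which is routine.
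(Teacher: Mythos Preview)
Your proposal is correct and is exactly the approach the paper takes: the paper's proof reads in its entirety ``By the same argument used in the proof of Corollary~\ref{cor:H-blowup} we obtain the result,'' and you have faithfully spelled out that argument with the roles of $|H|$ and $|A|$ swapped. Your explicit verification of the type-II claim via $(T-t_k)^{1/2}\sup_{\Sigma_{t_k}}|A|\geq c\,(T-t_k)^{\lambda-1/2}\to\infty$ is a small addition beyond what the paper writes, but it is the intended reasoning.
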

\begin{proof}
By the same argument used in the proof of Corollary \ref{cor:H-blowup} we obtain the result.
\end{proof}

\end{document}